\allowdisplaybreaks \numberwithin{equation}{section}
\numberwithin{equation}{section}
\newtheorem{theorem}{Theorem}[section]
\newtheorem{proposition}[theorem]{Proposition}
\newtheorem{lemma}[theorem]{Lemma}
\newtheorem*{Yudovich's Theorem}{Yudovich's Theorem}
\theoremstyle{definition}
\newtheorem{definition}[theorem]{Definition}
\theoremstyle{remark}
\newtheorem{remark}[theorem]{Remark}
\begin{document}

\title
[Stability of a class of exact solutions of the incompressible Euler equation]{Stability of a class of exact solutions of the incompressible Euler equation in a disk}

 \author{Guodong Wang}

\address{School of Mathematical Sciences, Dalian University of Technology, Dalian 116024, PR China}
\email{gdw@dlut.edu.cn}


\begin{abstract}
We prove a sharp orbital stability result for a class of exact steady solutions, expressed in terms of Bessel functions of the first kind, of the two-dimensional  incompressible Euler equation in a disk.  A special case of these solutions is the truncated Lamb dipole, whose stream function corresponds to the second eigenfunction of the Dirichlet Laplacian. The proof is achieved by establishing a suitable variational characterization for these solutions via conserved quantities of the Euler equation and employing a compactness argument.
\end{abstract}

\maketitle
\tableofcontents
\section{Introduction and main result}\label{sec1}

 In the mathematical theory of two-dimensional ideal fluids, stability analysis is an important and challenging research topic. Over the past century, extensive research has been conducted on various stability problems, which have resulted in many stability criteria for general flows \cite{A1,A2,BAR,FSV,LZSM,LZCMP,WTAMS,WG1,WG2}, as well as  a large number of stability results for specific flows of physical significance \cite{Abe,BG,CWCV,CWJMFM,CJ,CG,LLZ,WJDE,WMA}. Although substantial progress has been made in this field, a comprehensive understanding of the intrinsic mechanisms of hydrodynamic stability is still elusive. For most flows, even those with an explicit expression, determining their stability can still be highly complicated.

In this paper, we focus on a class of specific flows with explicit expressions for an ideal fluid in a two-dimensional disk. The stream/vorticity functions of these flows can be expressed in terms of Bessel functions of the first kind, which form a three-dimensional linear space.  In particular, these flows include the truncated Lamb dipole as a special case,   the stream/vorticity function of which is the second eigenfunction of the Laplace operator with zero Dirichlet boundary condition. We prove that these flows are Lyapunov stable up to two-dimensional rigid rotations with respect to the $L^p$ norm of the vorticity for any $1<p<\infty$. To the best of our knowledge, this appears to be the first stability result for two-dimensional ideal flows related to Dirichlet Laplacian eigenfunctions of order greater than 1 in a bounded domain.

Throughout this paper, let $D$ be the unit disk centered at the origin
 \[D=\left\{\mathbf x=(x_1,x_2)\in \mathbb R^2\mid |\mathbf x|:=\sqrt{x_1^2+x_2^2}<1\right\}.\]
Consider the Euler equation of a two-dimensional ideal fluid in $D$ with impermeability boundary condition:
\begin{equation}\label{euler}
\begin{cases}
  \partial_t\mathbf{v}+(\mathbf{v}\cdot\nabla)\mathbf{v}=-\nabla P, & t\in\mathbb R,\,\,\mathbf x\in D, \\
  \nabla\cdot\mathbf{v}=0, & t\in\mathbb R,\,\,\mathbf x\in D, \\
  \mathbf{v}\cdot\mathbf{n} =0,&   t\in\mathbb R,\,\,\mathbf x\in \partial D,
\end{cases}
\end{equation}
where $\mathbf{v}(t,\mathbf x)=(v_1(t,\mathbf x),v_2(t,\mathbf x))$ is the velocity field, $P(t,\mathbf x)$ is the scalar pressure, and $\mathbf{n}(\mathbf x)$ is the outward unit normal of $\partial D$ (i.e., $\mathbf n(\mathbf x)=\mathbf x$). The boundary condition means that there is no matter flow passing through $\partial D$.
Introduce the vorticity $\omega:= \partial_{x_1}v_2-\partial_{x_2}v_1$. Then $\omega$ satisfies the vorticity equation:
\begin{equation}\label{vor}
\begin{cases}
\partial_t\omega+\mathbf v\cdot\nabla\omega=0, & t\in\mathbb R,\,\,\mathbf x\in D,\\
\mathbf v=\nabla^\perp\mathcal{G}\omega,
\end{cases}
\end{equation}
where  $\nabla^\perp:=(\partial_{x_2},-\partial_{x_1}),$ and $\mathcal G$ is the inverse of $-\Delta$ under the zero Dirichlet boundary condition, i.e.,
\begin{equation}\label{defg}
\begin{cases}
    -\Delta \mathcal G\omega=\omega, & \mathbf x\in D, \\
    \mathcal G\omega=0,&\mathbf x\in \partial D.
  \end{cases}
  \end{equation}
Note that $\mathcal G$ can be explicitly expressed as follows:
\[\mathcal{G} \omega(t,\mathbf x) =\int_{D}G(\mathbf x,\mathbf y)\omega(t,\mathbf y)d\mathbf y,\quad G(\mathbf x,\mathbf y):=-\frac{1}{2\pi}\ln|\mathbf x-\mathbf y|+\frac{1}{2\pi}\ln\left|\frac{\mathbf x}{|\mathbf x|}-|\mathbf x|\mathbf y\right|.\]
It is also useful to introduce the stream function $\psi$ of a fluid flow, a function that is constant along every streamline. Up to a constant,  we can take  $\psi=\mathcal G\omega$.

There are rich results on the global well-posedness of the two-dimensional incompressible Euler equation. See \cite{De,DM,Giga,Y} for example. In this paper,  we mainly work in the setting of Yudovich \cite{Y}.

\begin{Yudovich's Theorem} \label{thmbt}
For any $\omega_0\in L^\infty(D)$, there exists a unique $\omega \in L^\infty(\mathbb R;L^\infty(D))$  such that
\begin{itemize}
\item[(1)] $\omega$ solves the vorticity equation in the  sense of distributions, i.e.,
    \[
  \int_{\mathbb R}\int_D\omega\left(\partial_t\varphi+ \nabla^\perp \mathcal G\omega\cdot\nabla\varphi\right) d\mathbf xdt=0\quad \forall\,\varphi\in C_c^{\infty}(\mathbb R\times D);
  \]
\item[(2)] $\omega\in C(\mathbb R;L^r(D))$ for any $r\in[1,\infty),$ and $\omega(0,\cdot)=\omega_0$;
\item[(3)] $\omega(t,\cdot)\in \mathcal{R}_{\omega_0}$ for any $t\in\mathbb R$, where
     $\mathcal R_{\omega_0}$ denotes the rearrangement class of $\omega_0$,
\begin{equation}\label{drc1}
 \mathcal R_{\omega_0}:=\left\{ v: D\to\mathbb R\mid  |\{\mathbf x\in D\mid v(\mathbf x) >s\}|=|\{\mathbf x\in D\mid \omega_0(\mathbf x) >s\}|\,\,\forall\,s\in\mathbb R\right\},
\end{equation}
where $|\cdot|$ denotes the two-dimensional Lebesgue measure;
\item[(4)]  $E(\omega(t,\cdot))=E(\omega_0)$ for any $t\in\mathbb R,$
where $E$ denotes the kinetic energy of the fluid,
\begin{equation}\label{tweak2}
E(\omega(t,\cdot)):=\frac{1}{2}\int_D\omega(t,\mathbf x)\mathcal G\omega(t,\mathbf x) d\mathbf x.
\end{equation}
\end{itemize}
\end{Yudovich's Theorem}
The function  $\omega$ is called a \emph{Yudovich solution} of the vorticity equation \eqref{vor} with initial vorticity $\omega_0$.
The proof of Yudovich's Theorem can also be found in \cite{BAR,MB,MP}.

Given $\bar\omega\in C^1(\bar D)$, it is a steady solution  of \eqref{vor} if and only if $\nabla\mathcal G\bar\omega$ and $\nabla \bar\omega$  are parallel. A sufficient condition for this to hold is that $\bar\omega$ satisfies  the functional relationship
\begin{equation}\label{sct01}
\bar\omega=g(\mathcal G\bar\omega)
\end{equation}
for some $g\in C^1(\mathbb R).$ See \cite{CWPr} for a more general sufficient condition  in the setting of weak solutions. Note that \eqref{sct01} is in fact a semilinear elliptic equation from the viewpoint of the stream function $\bar\psi:=\mathcal G\bar\omega$:
\begin{equation}\label{seli01}
\begin{cases}
-\Delta\bar\psi=g(\bar\psi),&\mathbf x\in D,\\
\bar\psi=0,&\mathbf x\in\partial D.
\end{cases}
\end{equation}

In the paper, we are concerned with the case where $g$ is a linear function. In this case, \eqref{sct01} becomes
$\bar\omega=\lambda\mathcal G\bar\omega,$
 which corresponds to the Laplacian eigenvalue problem for $\bar\psi:$
\begin{equation}\label{egp01}
\begin{cases}
-\Delta\bar\psi=\lambda \bar\psi,&\mathbf x\in D,\\
\bar\psi=0,&\mathbf x\in\partial D.
\end{cases}
\end{equation}
Denote by $J_n$ the Bessel function of the first kind of order $n$,
 \[J_n(s)=\sum_{k=0}^\infty\frac{(-1)^k}{k!(k+n)!}
\left(\frac{s}{2}\right)^{2k+n},  \quad n=0,1,2,\cdot\cdot\cdot, \,\, s\in\mathbb R,  \]
and by $j_{n,k}$  the $k$-th positive zero of $J_n.$
Then the set of eigenvalues of \eqref{egp01} is
\[\left\{j^2_{n,k}\mid  n=0,1,2,\cdot\cdot\cdot, \,  k=1,2,3,\cdot\cdot\cdot\right\}.\]
Note that the first two eigenvalues are $j_{0,1}^2$ and $j_{1,1}^2.$
The eigenspace $\mathbf E_{n,k}$ related to the eigenvalue $j^2_{n,k}$ is given by
\[\mathbf E_{n,k}={\rm span}\left\{J_n(j_{n,k}r)\cos (n\theta) ,\,J_n(j_{n,k}r)\sin (n\theta) \right\},\quad n=0,1,2,\cdot\cdot\cdot, \,\,k=1,2,3,\cdot\cdot\cdot,\]
where   $(r,\theta)$ denotes the polar coordinates,
\[x_1=r\cos\theta,\quad x_2=r\sin\theta.\]
It is easy to see that $\mathbf E_{n,k}$ is one-dimensional if $n=0,$ and is two-dimensional if $n\ge 1.$

In this paper, we focus on a larger linear space  of steady solutions of the Euler equation than $\mathbf E_{n,k}$. Define
   \begin{equation}\label{defvnk}
   \mathbf V_{n,k}:={\rm span}\left\{J_0(j_{n,k}r),\,J_n(j_{n,k}r)\cos (n\theta) ,\,J_n(j_{n,k}r)\sin (n\theta) \right\},\quad n=0,1, \cdot\cdot\cdot, \,\,k=1,2, \cdot\cdot\cdot.
   \end{equation}
\begin{lemma}
The space $ \mathbf V_{n,k}$ can be characterized by
\[ \mathbf V_{n,k}=\left\{v\in L^2(D)\mid  v=j_{n,k}^2\mathcal Gv+cJ_0(j_{n,k}),\,c\in\mathbb R\right\}.\]
As a consequence, any  element in $\mathbf V_{n,k}$ is a steady solution of \eqref{vor}.
\end{lemma}
\begin{proof}
 For any $ v\in\mathbf V_{n,k}$, we can write $v=v_1+v_2$, where $v_1\in \mathbf E_{n,k}$ and  $v_2=cJ_0(j_{n,k}r)$ for some $c\in\mathbb R$. Since $v_1\in \mathbf E_{n,k}$, it holds that 
\begin{equation}\label{yunc1}
v_1=j_{n,k}^2\mathcal Gv_1.
 \end{equation}
Using the fact that  $J_n$ solves  Bessel's differential equation (cf.  \eqref{bsleq} in Section \ref{sec2}),
one can verify that $v_2$ satisfies 
$-\Delta v_2=j_{n,k}^2 v_2$ in $ D$ and $v_2=cJ_0(j_{n,k})$ on $\partial D$, 
 or equivalently,
\begin{equation}\label{yunc2}
v_2=j_{n,k}^2\mathcal Gv_2+cJ_0(j_{n,k}).
\end{equation}
From \eqref{yunc1} and \eqref{yunc2}, we deduce that $v=j_{n,k}^2\mathcal Gv+cJ_0(j_{n,k})$.
 
Conversely, if $v\in L^2(D)$ satisfies $v=j_{n,k}^2\mathcal Gv+cJ_0(j_{n,k})$ for some $c\in\mathbb R$, then one can check that
$w:=v-cJ_{0}(j_{n,k}r)$
satisfies  $w=j_{n,k}^2\mathcal Gw$ and $w|_{\partial D}=0$ by 
\eqref{yunc2}. Hence  $w\in\mathbf E_{n,k}$, and thus $v=w+cJ_0(j_{n,k}r)\in\mathbf V_{n,k}$.  \end{proof}

We are concerned with the Lyapunov stability of steady flows with  vorticity belonging to $\mathbf V_{n,k}$. Specifically, given a steady solution $\bar\omega\in\mathbf V_{n,k}$ of \eqref{vor},  if we add a ``small" perturbation to it and follow the nonlinear dynamics of the vorticity equation \eqref{vor}, we want to know whether the perturbed solution remains ``close" to $\bar\omega$ for all time. Of course, the terms ``small" and ``close" should be specified in a rigorous mathematical setting.
Since $D$ is a disk with a continuous rotational symmetry, in some situations one may need to consider \emph{orbital stability}, i.e., the stability of the orbit of some function under the action of the two-dimensional rotation group. For the following two special cases, stability is known.
\begin{itemize}
  \item  [(1)]  $\bar\omega\in\mathbf V_{0,1}$. In this case, $\bar\omega$ is radially monotone,  so stability holds by  \cite[Theorem 3]{BAR} (or by  a generalized version of Arnold's second stability theorem, cf.  \cite[Theorem 1.5]{WTAMS}).
  \item [(2)]  $\bar\omega\in {\rm span}\left\{J_{0}(j_{1,1}r)\right\}\subset \mathbf V_{1,1}$. Such  $\bar\omega$ is also radially monotone, thus stability still holds.
\end{itemize}
 
The main purpose of this paper is to study the stability of the steady solutions in $\mathbf V_{1,1}$. For simplicity, in the rest of this paper we shall denote
\begin{equation}\label{defj11}
j:=j_{1,1}\approx 3.831706,
\end{equation}
\begin{equation}\label{defv11}
\mathbf V:=\mathbf V_{1,1}={\rm span}\left\{J_0(jr),\,J_1(jr)\cos \theta,\,J_1(jr)\sin \theta \right\}.
\end{equation}
Introduce the orbit $\mathcal O_\zeta$ of a function $\zeta:D\to \mathbb R$ under the action of  the two-dimensional rotation group $\mathbb S\mathbb O(2)$:
\begin{equation}\label{deforb}
\mathcal O_\zeta:=\left\{\zeta(r,\theta+\beta)\mid\beta\in\mathbb R\right\},
\end{equation}
where $(r,\theta)$ still denotes the polar coordinates.
Denote by $\|\cdot\|_p$   the norm in $L^p(D).$

Our main result can be stated as follows.

\begin{theorem}\label{thmmain}
Let $1<p<\infty$ and $\bar\omega\in\mathbf V$ be fixed.
 Then $\bar\omega$ is orbitally stable under the dynamics of  the vorticity equation \eqref{vor} in the following sense:  for any $\varepsilon>0$, there exists some $\delta>0$, such that for any $\omega_0\in L^\infty(D)$, if
\[\|\omega_0-\bar\omega\|_p<\delta,\]
then for any $t\in\mathbb R$, there exists some $\tilde\omega\in\mathcal O_{\bar\omega}$  such that
\[\|\omega(t,\cdot)-\tilde\omega\|_p<\varepsilon,\]
where $\omega(t,\mathbf x)$ is the Yudovich solution of \eqref{vor} with initial vorticity $\omega_0$.
\end{theorem}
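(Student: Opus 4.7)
The plan is to follow the variational-compactness strategy suggested by the abstract: characterize $\mathcal O_{\bar\omega}$ as the set of maximizers of the kinetic energy on the rearrangement class $\mathcal R_{\bar\omega}$, establish a compactness property for near-maximizing sequences, and close by a contradiction argument using the conservation laws in Yudovich's Theorem.

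I would first establish the variational characterization. The identity $\bar\omega=j^{2}\mathcal G\bar\omega+aJ_{0}(j)$ supplied by the preceding lemma, combined with the rearrangement invariance of $\|\omega\|_{2}$ and $\int_{D}\omega\,d\mathbf x$, shows that on $\mathcal R_{\bar\omega}$ the kinetic energy $E$ differs by a constant from the augmented functional
\[
\Phi(\omega):=E(\omega)-\frac{1}{2j^{2}}\|\omega\|_{2}^{2}+\frac{aJ_{0}(j)}{j^{2}}\int_{D}\omega\,d\mathbf x,
\]
whose unconstrained Euler--Lagrange equation is precisely $\omega=j^{2}\mathcal G\omega+aJ_{0}(j)$. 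Expanding in the Dirichlet eigenbasis $\{\phi_{n,k}\}$ of $-\Delta$ on $D$ gives the second variation
\[
\Phi(\bar\omega+\delta)-\Phi(\bar\omega)=\frac{1}{2j^{2}}\sum_{n,k}\frac{j^{2}-j_{n,k}^{2}}{j_{n,k}^{2}}\,|\langle\delta,\phi_{n,k}\rangle|^{2},
\]
which is negative on modes with $j_{n,k}^{2}>j^{2}$, zero on $\mathbf E_{1,1}$, and positive on the single mode $\phi_{0,1}$. I would then use the rearrangement constraint $\|\bar\omega+\delta\|_{2}=\|\bar\omega\|_{2}$ together with higher-order Casimir identities to eliminate the bad $\phi_{0,1}$ direction along tangents to $\mathcal R_{\bar\omega}$, reducing any maximizer to the three-dimensional space $\mathbf V$, after which rearrangement equivalence within $\mathbf V$ pins it down to $\mathcal O_{\bar\omega}$.

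Next, I would prove the compactness statement: every sequence $\{\omega_{n}\}\subset\mathcal R_{\bar\omega}$ with $E(\omega_{n})\to E(\bar\omega)$ has, up to rotations $\tau_{n}\in\mathbb{SO}(2)$ and a subsequence, $\tau_{n}\omega_{n}\to\bar\omega$ strongly in $L^{p}(D)$. The rearrangement class is uniformly bounded in every $L^{q}$, so there is a weak subsequential limit $\omega^{\ast}$; compactness of the Green operator gives $E(\omega^{\ast})=E(\bar\omega)$; the Douglas--Burton characterization places $\omega^{\ast}$ in the closed convex hull of $\mathcal R_{\bar\omega}$, and preservation of $\|\omega\|_{2}$ under the weak limit forces $\omega^{\ast}\in\mathcal R_{\bar\omega}$. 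The variational characterization identifies $\omega^{\ast}$ with some rotation $\tau\bar\omega$, and equality of $L^{p}$ norms together with weak convergence upgrades to strong $L^{p}$ convergence by uniform convexity. A standard contradiction argument then closes the proof: negate the conclusion to produce $\varepsilon_{0}>0$, a sequence $\omega_{0}^{n}\to\bar\omega$ in $L^{p}(D)$ (uniformly bounded in $L^{\infty}$ after a truncation), and times $t_{n}>0$ with $\mathrm{dist}_{L^{p}}(\omega^{n}(t_{n},\cdot),\mathcal O_{\bar\omega})\ge\varepsilon_{0}$; conservation of energy plus convergence of distribution functions yields $\tilde\omega_{n}\in\mathcal R_{\bar\omega}$ with $\|\omega^{n}(t_{n},\cdot)-\tilde\omega_{n}\|_{p}\to 0$ and $E(\tilde\omega_{n})\to E(\bar\omega)$, and the compactness step then contradicts the distance lower bound.

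The principal obstacle is the rigid identification of maximizers in the first step. Because $\bar\omega$ is in general not radial, the rearrangement-monotonicity arguments that handle $\mathbf V_{0,1}$ do not apply, and because the quadratic form above is indefinite (positive on $\phi_{0,1}$), Arnold's second stability theorem fails outright. One must simultaneously exploit the spectral geometry $j_{0,1}<j_{1,1}=j<j_{2,1}$ on the disk, the full set of Casimir (rearrangement) constraints to neutralize the $\phi_{0,1}$ direction, and the explicit three-dimensional structure of $\mathbf V$, in order to rule out any maximizer lying outside the rotation orbit of $\bar\omega$.
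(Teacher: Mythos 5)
Your overall architecture (variational characterization of $\mathcal O_{\bar\omega}$ as the energy maximizers on $\mathcal R_{\bar\omega}$, then compactness of maximizing sequences, then a contradiction argument using conservation of $E$ and of the rearrangement class, with a ``follower'' in $\mathcal R_{\bar\omega}$ constructed via the nearest-point lemma) is exactly the paper's strategy. But the heart of the matter --- the variational characterization --- is not actually established in your proposal; you correctly identify it as ``the principal obstacle'' and then describe ingredients rather than an argument. Your second-variation computation is right as far as it goes: using $\bar\omega=j^2\mathcal G\bar\omega+aJ_0(j)$ together with the constraints $\int_D\delta\,d\mathbf x=0$ and $\|\bar\omega+\delta\|_2=\|\bar\omega\|_2$, one gets exactly $E(\bar\omega+\delta)-E(\bar\omega)=\tfrac12\bigl(\int_D\delta\,\mathcal G\delta\,d\mathbf x-j^{-2}\|\delta\|_2^2\bigr)$, so everything reduces to the spectral claim that $\sup\{\int_D v\,\mathcal Gv\,d\mathbf x:\|v\|_2=1,\ \int_Dv\,d\mathbf x=0\}=j_{1,1}^{-2}$ with equality set exactly $\mathbf V$. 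This is a genuine constrained eigenvalue problem (the mean-zero constraint couples all the radial modes $\phi_{0,k}$, none of which is individually killed by it), and the paper resolves it by invoking Greco--Lucia's result that the first eigenvalue of $-\Delta u=\lambda u$ with constant Dirichlet data and zero mean is $j_{1,1}^2$ with the three-dimensional eigenspace $\mathbf V$. Your proposed mechanism --- ``higher-order Casimir identities to eliminate the bad $\phi_{0,1}$ direction'' --- is not how this works and would not work: the higher moments $\int_D\omega^m\,d\mathbf x$, $m\ge3$, are nonlinear constraints and cannot neutralize a linear direction in a quadratic form; only the $m=1,2$ constraints enter here. The higher moments are used for a different purpose, namely the separate and also nontrivial identification $\mathbf V\cap\mathcal R_{\bar\omega}=\mathcal O_{\bar\omega}$ (the paper's Section 4, via the $m=2,3$ moment identities and explicit Bessel integrals reducing to an algebraic system with a unique admissible solution); your ``rearrangement equivalence within $\mathbf V$ pins it down'' restates this claim without proving it.

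There is also a flaw in your compactness step: ``preservation of $\|\omega\|_2$ under the weak limit'' is false in general --- weak convergence only gives $\|\omega^\ast\|_2\le\liminf_n\|\omega_n\|_2$, and equality here is essentially what you are trying to prove. The paper instead argues that the weak limit $\tilde v$ attains $\max_{\mathcal R_{\bar\omega}}E$ (by weak continuity of $E$), then uses Burton's lemma to produce $\hat v\in\mathcal R_{\bar\omega}$ maximizing $v\mapsto\int_Dv\,\mathcal G\tilde v\,d\mathbf x$ over the weak closure, and computes $E(\tilde v-\hat v)\le0$, which by positive-definiteness of $\mathcal G$ forces $\tilde v=\hat v\in\mathcal R_{\bar\omega}$; strong $L^p$ convergence then follows from uniform convexity as you say. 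Your concluding contradiction argument and follower construction are essentially correct, but as written the proof has two genuine gaps: the constrained spectral inequality with its equality set, and the membership of the weak limit in $\mathcal R_{\bar\omega}$.
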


\begin{remark}
The conclusion in Theorem \ref{thmmain} can be rephrased as follows:
for any $\varepsilon>0$, there exists some $\delta>0$, such that for any Yudovich solution $\omega(t,\mathbf x)$  of \eqref{vor}, it holds that
\[\min_{v\in\mathcal O_{\bar\omega}}\|\omega(0,\cdot)-v\|_p<\delta\quad \Longrightarrow\quad  \min_{v\in\mathcal O_{\bar\omega}}\|\omega(t,\cdot)-v\|_p<\varepsilon\quad\forall\,t\in\mathbb R.\]
\end{remark}

 \begin{remark}
Stability in Theorem \ref{thmmain} actually  holds for more general perturbations than Yudovich solutions, as long as the vorticity belongs to $L^p(D)$ and
 the kinetic energy and the distribution function of the vorticity are conserved. See Theorem \ref{thm42} in Section \ref{sec5} for the details.
 \end{remark}

 In  Section \ref{sec6}, we will show that the orbital stability  in Theorem \ref{thmmain} is actually sharp, in the sense that $\mathcal O_{\bar\omega}$ is the ``smallest" set containing $\bar\omega$ that is stable under the Euler dynamics.

As a straightforward consequence of Theorem \ref{thmmain}, we  obtain the orbital stability of the steady solutions in $\mathbf E_{1,1}$.  As far as we know, this is the first stability result on plane ideal flows related to the second-lowest eigenfunctions of the Dirichlet Laplacian in a bounded domain.
We also mention that the steady solutions in $\mathbf E_{1,1}$ are related to the Lamb dipole. The Lamb dipole, also called the Chaplygin-Lamb dipole \cite{Chap,Lamb}, is an exact steady solution of the incompressible Euler equation in the whole plane. For the Lamb dipole, the stream function $\psi_L$ can be explicitly expressed as follows (cf.  \cite{Abe,Lamb}):
\[\psi_L(r,\theta)=\begin{cases}
\frac{2W}{\mu J_0(j)}J_1(\mu r)\sin\theta,&r\leq \frac{j}{\mu },\\
W\left(r-\frac{j^2}{\mu^2 r}\right)\sin\theta,&r>\frac{j}{\mu},
\end{cases}\]
where $(W,0)$ with $W\in\mathbb R\setminus\{0\}$ is the velocity field of the fluid at space infinity, and $\mu$ is a positive parameter representing the strength of the vortex.  The corresponding vorticity $\omega_L$ is
\begin{equation}\label{ogal}
\omega_L(r,\theta)=\begin{cases}
\frac{2\mu W}{J_0(j)}J_1(\mu r)\sin\theta,&r\leq \frac{j}{\mu },\\
0,&r>\frac{j}{\mu}.
\end{cases}
\end{equation}
The orbital stability of the Lamb dipole was recently solved by Abe and Choi \cite{Abe}. See also \cite{WTAMS2} for another form of orbital stability.
Taking $\mu=j$ in \eqref{ogal}, we see that inside the separatrix $\{\psi_L=0\}$ (which is exactly the unit circle) the vorticity function $\omega_L$ is an element of $\mathbf E_{1,1}$. Therefore Theorem \ref{thmmain}  actually confirms the orbital stability of
the truncated Lamb dipole.

The most important step in proving  Theorem \ref{thmmain} is to show that $\mathcal O_{\bar\omega}$ has a clean variational characterization in terms of  conserved quantities of the Euler equation. More precisely,

\begin{theorem}\label{thmmain2}
For any $\bar\omega\in\mathbf V,$  $\mathcal O_{\bar\omega}$ is exactly the set of maximizers of $E$ relative to $\mathcal R_{\bar\omega}.$
\end{theorem}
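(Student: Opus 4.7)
The plan is to prove Theorem \ref{thmmain2} in three stages: establishing existence of a maximizer via weak compactness, carrying out a Burton-type Euler-Lagrange step, and then executing a sharp rigidity argument that pins the maximizer to $\mathbf V\cap\mathcal R_{\bar\omega}$.

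For existence, observe that $\bar\omega$ is a finite linear combination of Bessel modes, so $\bar\omega\in L^\infty(D)$ and $\mathcal R_{\bar\omega}$ is bounded in $L^\infty$, hence weakly precompact in every $L^p(D)$. By elliptic regularity together with the Rellich-Kondrachov theorem, $\mathcal G$ is compact from $L^p(D)$ into its dual, so $E$ is sequentially weakly continuous. Any maximizing sequence admits a weak limit $\omega^*$ in the weak closure of $\mathcal R_{\bar\omega}$; since $E$ is strictly convex ($\mathcal G$ is positive and self-adjoint), a Burton-type extreme-point argument places $\omega^*$ inside $\mathcal R_{\bar\omega}$ itself, and a further application of Burton's characterization of rearrangement maximizers produces a non-decreasing function $\phi$ with $\omega^* = \phi(\mathcal G\omega^*)$ almost everywhere.

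The crux is a Poincar\'e-type reformulation. Since $j=j_{1,1}$ is a zero of $J_1$, one has $\int_D J_0(jr)\,d\mathbf x = 2\pi J_1(j)/j = 0$, so $\int_D\bar\omega\,d\mathbf x = 0$ for every $\bar\omega\in\mathbf V$. Pairing the identity $\bar\omega - j^2\mathcal G\bar\omega = aJ_0(j)$ with $\bar\omega$ then gives $E(\bar\omega)=\|\bar\omega\|_2^2/(2j^2)$. Because $\|\omega\|_2^2$ is constant on $\mathcal R_{\bar\omega}$, Theorem \ref{thmmain2} is equivalent to the sharp inequality
\[
\bigl\langle\omega,\,(I - j^2\mathcal G)\omega\bigr\rangle \geq 0 \qquad \forall\,\omega\in\mathcal R_{\bar\omega},
\]
with equality precisely on $\mathcal O_{\bar\omega}$. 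In the Laplace-Dirichlet eigenbasis this reads $\sum_i c_i^2(1-j^2/\lambda_i)\geq 0$: the single negative contribution comes from the one-dimensional ground-state subspace $\mathrm{span}\{J_0(j_{0,1}r)\}$, while the eigenspace $\mathbf E_{1,1}$ contributes zero and all higher eigenspaces contribute positively.

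To close the inequality I would control the ground-state overlap by Hardy-Littlewood: since $J_0(j_{0,1}r)$ is radially decreasing on $D$, the quantity $|\langle\omega,J_0(j_{0,1}r)\rangle|$ is bounded by the pairing of the symmetric-decreasing rearrangement of $\bar\omega$ with $J_0(j_{0,1}r)$. The positive higher-mode contribution is then bounded from below using the fixed value of $\|\omega\|_2^2$, the Parseval expansion of $\bar\omega\in\mathbf V$ in the eigenbasis, and the numerical gap between $j^2$ and $j_{2,1}^2$. Once the inequality is established, equality forces $\omega^* - j^2\mathcal G\omega^*$ to be constant, so that $\omega^*\in\mathbf V$. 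An element of $\mathbf V\cap\mathcal R_{\bar\omega}$ is then determined up to rotation by matching two further rearrangement-invariant moments such as $\int\omega^2$ and $\int\omega^3$ (the orthogonality of $J_0(jr)$ against $J_1(jr)\cos\theta$ and $J_1(jr)\sin\theta$ decouples the resulting algebraic system), placing $\omega^*\in\mathcal O_{\bar\omega}$; the reverse inclusion is immediate from rotation-invariance of $E$ and $\mathcal R_{\bar\omega}$. The principal obstacle is the rigidity inequality itself: the ground-state mass permitted by Hardy-Littlewood must be precisely counter-balanced by the positive higher-mode mass forced by the distribution constraint, and the resulting chain of estimates must collapse to equality only on the orbit $\mathcal O_{\bar\omega}$.
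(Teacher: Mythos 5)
Your overall architecture is close to the paper's: the reduction of the theorem to the sharp quadratic inequality $\langle\omega,(I-j^2\mathcal G)\omega\rangle\ge 0$ on the zero-mean rearrangement class (using $\int_D J_0(jr)\,d\mathbf x=2\pi J_1(j)/j=0$ and $E(\bar\omega)=\|\bar\omega\|_2^2/(2j^2)$) is exactly the right reformulation, and your final step of matching the rearrangement-invariant moments $\int\omega^2$ and $\int\omega^3$ to pin an element of $\mathbf V\cap\mathcal R_{\bar\omega}$ to the orbit $\mathcal O_{\bar\omega}$ is precisely the paper's Section \ref{sec4} argument (though there the uniqueness of the resulting algebraic system $2x^2+y^2=r_1$, $4x^3+3xy^2=r_2$ with $y\ge0$ hinges on the specific Bessel identities $p_1=2p_2$, $q_1=\tfrac43 q_2$, which you would still need to verify).

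The genuine gap is the rigidity inequality itself, which you flag as the ``principal obstacle'' but whose proposed proof does not close. Your plan is to bound the ground-state overlap by Hardy--Littlewood and to dominate the resulting negative term $c_1^2(1-j^2/j_{0,1}^2)$ by the positive higher-mode mass forced by the constraints, using the gap between $j^2$ and the next eigenvalue. A direct check shows this cannot work with the tools you name: writing $m_k^2=4\pi/j_{0,k}^2$ for the squared means of the normalized radial eigenfunctions, the zero-mean constraint together with Cauchy--Schwarz forces at most $\sum_{k\ge2}c_{0,k}^2\ge c_1^2\,m_1^2/(\pi-m_1^2)\approx 2.24\,c_1^2$ of mass in higher radial modes, whose positive contribution is at most weighted by $1-j^2/j_{0,2}^2\approx 0.52$, yielding roughly $1.16\,c_1^2$ against a deficit of $(j^2/j_{0,1}^2-1)c_1^2\approx 1.54\,c_1^2$. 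The inequality is in fact sharp and holds for \emph{all} zero-mean $L^2$ functions (the distributional information beyond zero mean and the $L^2$ norm plays no role), so no rearrangement inequality such as Hardy--Littlewood can supply the missing factor; what is needed is the exact spectral statement that $j^2$ is the \emph{first} eigenvalue of the Laplacian under mean-zero and constant-Dirichlet-data constraints, with eigenspace exactly $\mathbf V$. The paper obtains this from Greco--Lucia (Lemma \ref{lemagl}) via the pair of dual variational problems $(V_1)$ and $(V_2)$; without that input (or a self-contained proof of it), your equality analysis — ``equality forces $\omega^*-j^2\mathcal G\omega^*$ to be constant'' — also has no foundation, since a non-sharp chain of estimates would not characterize the equality case. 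The preliminary Burton-type steps (weak compactness, $\omega^*\in\mathcal R_{\bar\omega}$, the monotone $\phi$) are fine but ultimately unnecessary once the sharp inequality is in hand.
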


The above variational characterization is obtained by establishing an energy-enstrophy inequality (cf. Section \ref{sec3}) and analyzing the equimeasurable partition of the three-dimensional linear space $\mathbf{V}$ (cf. Section \ref{sec4}).
Once  Theorem \ref{thmmain2} has been proved,  the desired orbital stability can be obtained by a standard compactness argument (cf. Section \ref{sec5}).

 Theorems \ref{thmmain} and \ref{thmmain2} are closely related to Burton's work \cite{BAR}. In \cite{BAR}, inspired by Arnold's pioneering work \cite{A1,A2}, Burton  established a very general stability criterion for two-dimensional ideal fluids in a bounded domain. More specifically,  Burton proved that \emph{any isolated maximizer of the kinetic energy $E$ relative to the  rearrangement class of some $L^p$ function is stable with respect to the $L^p$ norm of the vorticity, where $1<p<\infty$.} Recently, the author  \cite{WMA} extended Burton's result to the case of an isolated set of local maximizers. Burton's criterion is elegant and concise and, moreover, it provides an appropriate framework for studying the stability of two-dimensional ideal fluids. Many classical stability results can be reproved or generalized in the framework of Burton's criterion, including Arnold's second stability theorem \cite{A1,A2} and some of its extensions \cite{WTAMS,WG1,WG2}. Burton's stability criterion is also applied to the following:
\begin{itemize}
\item[(1)]  flows with  nonnegative and radially decreasing vorticity in a disk. (The vorticity is the unique global maximizer of the  kinetic energy relative to its rearrangement class,  cf. \cite{BMcL}.)

  \item [(2)] flows related to least energy solutions of the Lane-Emden equation in a convex domain. (The vorticity is the unique global maximizer of the  kinetic energy relative to its rearrangement class, cf. \cite{WJDE}.)
  \item[(3)]  some concentrated vortex flows constructed in \cite{EM, Tu}.  (The vorticity is an isolated local maximizer of the  kinetic energy relative to its rearrangement class, cf. \cite{CWCV,CWJMFM,WMA}.)
  \end{itemize}
Returning to Theorem \ref{thmmain2}, it provides a new nontrivial example of global maximizers of the kinetic energy relative to some rearrangement class, i.e., the $\mathbb S\mathbb O(2)$-orbit of  functions in $\mathbf V.$  Accordingly, Theorem \ref{thmmain} can be regarded as a new application of Burton's stability criterion.

We note that the stability of steady flows with vorticity $\bar\omega\in\mathbf V_{n,k}$ when $n\ge 2$ or $k\ge 2$ remains unclear, even when $\bar\omega$ is radial.
This seems to be quite a challenging problem. We are not sure whether Burton's stability criterion can be applied for these flows.

 This paper is organized as follows. In Section \ref{sec2}, we provide some preliminaries that will be used in subsequent sections. In Section \ref{sec3}, we 
 prove an energy-enstrophy inequality.
In Sections  \ref{sec4} and \ref{sec5}, we give the proofs of Theorem \ref{thmmain2} and  \ref{thmmain}, respectively.  Section \ref{sec6} is devoted to a brief discussion on rotating Euler flows in a disk.

\section{Preliminaries}\label{sec2}

First we recall some basic facts about Bessel functions of the first kind.
 \begin{lemma}[{\cite[p. 93 and p. 102]{Bo}}]\label{lemabesl}
 Let $n$ be a nonnegative integer, and $J_n$ be the Bessel function of the first kind of order $n$.
 \begin{itemize}
   \item [(1)] $J_n$  solves Bessel's differential equation, i.e.,
\begin{equation}\label{bsleq}
s^2J_n''(s)+sJ_n'(s)+(s^2-n^2)J_n(s)=0 \quad\forall\, s\in\mathbb R.
\end{equation}
   \item[(2)]
  The following recurrence relations hold:
   \begin{equation}\label{bpr1}
   (s^{n+1}J_{n+1})'=s^{n+1}J_{n},
   \end{equation}
     \begin{equation}\label{bpr2}
    (s^{-n}J_n)'=-s^{-n}J_{n+1},
    \end{equation}
   \begin{equation}\label{bpr3}
     s(J_{n}+J_{n+2})=2(n+1)J_{n+1}.
   \end{equation}
    In particular, taking $n=0$ in \eqref{bpr1} and \eqref{bpr2} we have
      \begin{equation}\label{bp12}
      (sJ_1)'=s J_{0},
      \end{equation}
          \begin{equation}\label{bp12f}
      J_0'=- J_{1}.
      \end{equation}
     Taking $n=0$ and $s=j$ (recall that $j$ is the first positive zero of $J_1$) in \eqref{bpr3} we have
       \begin{equation}\label{bp22}
       J_2(j)=-J_0(j).
       \end{equation}
   \item [(3)] If   $\alpha,\beta$ are two zeros of $J_n$, then
\begin{equation*}
  \int_0^1J_n(\alpha s)J_n(\beta s)sds=
  \begin{cases}
0, & \mbox{if }  \alpha\neq \beta,\\
\frac{1}{2}J^2_{n+1}(\alpha), &  \mbox{if }  \alpha=\beta.
\end{cases}
\end{equation*}
In particular, taking $n=1$ and $\alpha=\beta=j$ we obtain
  \begin{equation}\label{bp32}
              \int_0^1J_1^2(js) sds=\frac{1}{2}J^2_2(j).
              \end{equation}
 \end{itemize}
 \end{lemma}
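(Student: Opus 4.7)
The plan is to derive every identity directly from the power series $J_n(s) = \sum_{k \geq 0} \frac{(-1)^k}{k!(k+n)!}(s/2)^{2k+n}$, which converges absolutely on all of $\mathbb{R}$ and may be differentiated and integrated termwise throughout; part (3) is then handled by a standard Sturm--Liouville argument applied to the ODE from part (1).

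I would prove part (2) first, since it furnishes the quickest route to part (1). Multiplying the series for $J_{n+1}$ by $s^{n+1}$ and differentiating termwise produces $(s^{n+1}J_{n+1})' = s^{n+1}J_n$, which is \eqref{bpr1}; multiplying the series for $J_n$ by $s^{-n}$ and differentiating termwise produces $(s^{-n}J_n)' = -s^{-n}J_{n+1}$, which is \eqref{bpr2}. Expanding both left-hand sides via the product rule rewrites them as $sJ_{n+1}' + (n+1)J_{n+1} = sJ_n$ and $sJ_n' - nJ_n = -sJ_{n+1}$; shifting $n \to n+1$ in the second and adding to the first eliminates $J_{n+1}'$ and delivers \eqref{bpr3}. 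The specializations \eqref{bp12}, \eqref{bp12f}, and \eqref{bp22} follow at once by setting $n = 0$ or $s = j$ (using $J_1(j) = 0$ in the last case). For part (1), I would use the two relations $sJ_n' = nJ_n - sJ_{n+1}$ and $sJ_{n+1}' = sJ_n - (n+1)J_{n+1}$ just derived: differentiating the first, multiplying by $s$, and eliminating $J_{n+1}$ and $J_{n+1}'$ in favor of $J_n$ and $J_n'$ yields Bessel's equation \eqref{bsleq} after routine algebra.

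For part (3), I would rescale in Bessel's equation to conclude that $u(s) := J_n(\alpha s)$ satisfies $(su')' + (\alpha^2 s - n^2/s)u = 0$, and analogously for $v(s) := J_n(\beta s)$; multiplying the first by $v$, the second by $u$, subtracting, and integrating on $[0,1]$ produces
\[
(\beta^2 - \alpha^2)\int_0^1 J_n(\alpha s)\,J_n(\beta s)\,s\,ds = \alpha J_n'(\alpha) J_n(\beta) - \beta J_n(\alpha) J_n'(\beta),
\]
with the boundary contribution at $s = 0$ vanishing because the relevant integrand is $O(s^{2n+1})$ there. When $\alpha \neq \beta$ are both zeros of $J_n$, the right-hand side is zero, which gives the first case. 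For $\alpha = \beta$, I would fix a zero $\alpha$ of $J_n$, let $\beta \to \alpha$, and apply L'Hopital's rule in $\beta$ on both sides; this yields $\int_0^1 J_n^2(\alpha s)\,s\,ds = \tfrac{1}{2}(J_n'(\alpha))^2$. Finally, \eqref{bpr2} evaluated at $s = \alpha$ (where $J_n(\alpha) = 0$) gives $J_n'(\alpha) = -J_{n+1}(\alpha)$, producing the claimed value $\tfrac{1}{2} J_{n+1}^2(\alpha)$; the specialization \eqref{bp32} is the case $n = 1$, $\alpha = j$. The only delicate points I anticipate are the $s = 0$ boundary contribution and the $\alpha \to \beta$ limit in part (3); both are standard but should be spelled out carefully to keep the argument self-contained.
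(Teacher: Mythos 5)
Your derivation is correct, but note that the paper does not prove this lemma at all: it is quoted directly from Bowman's book \cite{Bo} (the page references in the lemma's heading are the entire justification), so the comparison here is between your self-contained argument and a bare citation. What you propose --- termwise differentiation of the power series to obtain \eqref{bpr1} and \eqref{bpr2}, algebraic elimination to get \eqref{bpr3} and then Bessel's equation \eqref{bsleq}, and the Lagrange identity for the Sturm--Liouville form $(su')'+(\alpha^2 s-n^2/s)u=0$ followed by a $\beta\to\alpha$ limit for part (3) --- is the standard textbook proof, and every step checks out, including the two delicate points you flag: the boundary term $s(u'v-uv')$ is $O(s^{2n+2})$ at $s=0$ because the leading terms of $u'v$ and $uv'$ cancel, and the L'H\^opital limit gives $\tfrac12\bigl(J_n'(\alpha)\bigr)^2=\tfrac12 J_{n+1}^2(\alpha)$ via $J_n'(\alpha)=-J_{n+1}(\alpha)$, which follows from \eqref{bpr2} at a zero of $J_n$. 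Two small corrections to your write-up. First, to eliminate $J_{n+1}'$ from $sJ_{n+1}'+(n+1)J_{n+1}=sJ_n$ and the shifted relation $sJ_{n+1}'-(n+1)J_{n+1}=-sJ_{n+2}$ you must \emph{subtract}, not add; adding eliminates $J_{n+1}$ instead and yields the companion identity $2J_{n+1}'=J_n-J_{n+2}$. The conclusion \eqref{bpr3} is unaffected. Second, in part (3) the orthogonality case of your argument requires $\alpha^2\neq\beta^2$, since the factor you divide by is $\beta^2-\alpha^2$: for $n\ge 1$ the zeros of $J_n$ come in pairs $\pm\alpha$, and $\int_0^1 J_n(\alpha s)J_n(-\alpha s)s\,ds=\pm\int_0^1 J_n^2(\alpha s)s\,ds\neq 0$, so the statement is tacitly about positive zeros --- which is the only case the paper ever uses (namely \eqref{bp32}).
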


Recall that $\mathcal G$ is the inverse of $-\Delta$ with zero Dirichlet boundary condition (cf. \eqref{defg} in Section \ref{sec1}).  Some basic facts about $\mathcal G$ are summarized as follows.

\begin{lemma}\label{pg0}
Let $1<p<\infty$ be fixed.
\begin{itemize}
    \item [(1)] $\mathcal{G}$ is a bounded operator from $L^{p}(D)$ to $W^{2,p}(D)$ and a compact operator from $L^{p}(D)$ to $L^{r}(D)$ for any $1 \leq r \leq \infty$.
    \item[(2)]$\mathcal G$ is symmetric, i.e.,
\[
\int_{D}v_1\mathcal G v_2 d\mathbf x=\int_{D}v_2\mathcal G v_1 d\mathbf x\quad\forall\, v_1,v_2\in {L}^{p}(D).
\]
\item[(3)]$\mathcal G$ is positive definite, i.e.,
\[
\int_{D}v\mathcal Gv d\mathbf x \geq 0  \quad\forall\,v\in {L}^{p}(D),
\]
and the  equality holds  if and only if $v\equiv 0.$
\end{itemize}
\end{lemma}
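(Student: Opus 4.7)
The plan is to derive all four assertions from standard elliptic theory applied to the Dirichlet problem $-\Delta u = f$ on the smooth bounded domain $D$, together with the explicit Green's representation $\mathcal G f(\mathbf x) = \int_D G(\mathbf x, \mathbf y) f(\mathbf y)\, d\mathbf y$ recalled in Section \ref{sec1}. None of the four assertions is new; the task is really to cite and assemble familiar tools in the right order.

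For item (1), I would invoke the Calder\'on--Zygmund $L^p$ estimate for the Laplacian on smooth bounded domains: for $1 < p < \infty$ and $f \in L^p(D)$, the unique weak solution $u = \mathcal G f$ of $-\Delta u = f$ with $u|_{\partial D} = 0$ satisfies $\|u\|_{W^{2,p}(D)} \leq C_p \|f\|_{L^p(D)}$, which is a textbook result for $C^{1,1}$ (in particular, for our smooth disk) domains. Item (2) then follows by composing (1) with the Rellich--Kondrachov theorem, which provides a compact embedding $W^{2,p}(D) \hookrightarrow L^r(D)$ in the admissible range of $r$. Since the spatial dimension is $2$ and $p > 1$, Sobolev embedding in fact gives a compact inclusion $W^{2,p}(D) \hookrightarrow C^0(\bar D)$, which handles the endpoint $r = \infty$ at the same time.

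For (3), integrating by parts twice using $\mathcal G v_i \in W_0^{1,p}(D) \cap W^{2,p}(D)$ (from (1)) yields
\[
\int_D v_1\, \mathcal G v_2\, d\mathbf x = \int_D \nabla \mathcal G v_1 \cdot \nabla \mathcal G v_2\, d\mathbf x = \int_D v_2\, \mathcal G v_1\, d\mathbf x,
\]
which is the claimed symmetry; alternatively, one may appeal directly to $G(\mathbf x, \mathbf y) = G(\mathbf y, \mathbf x)$ and Fubini. Taking $v_1 = v_2 = v$ in the middle expression gives $\int_D v\, \mathcal G v\, d\mathbf x = \|\nabla \mathcal G v\|_{L^2(D)}^2 \geq 0$, and if equality holds then $\nabla \mathcal G v \equiv 0$, which together with the boundary condition $\mathcal G v|_{\partial D} = 0$ forces $\mathcal G v \equiv 0$ and hence $v = -\Delta \mathcal G v = 0$. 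This establishes (4).

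No serious obstacle is anticipated; the only point requiring a moment of care is justifying the integrations by parts for merely $L^p$ data, i.e.\ checking that $\nabla \mathcal G v \in L^2(D)$ when $v \in L^p(D)$ with $1 < p \leq 2$. This is dispatched either by approximating $v$ in $L^p$ by smooth functions and passing to the limit using the continuity from (1), or more directly by invoking the two-dimensional Sobolev embedding $W^{2,p}(D) \hookrightarrow W^{1,q}(D)$ with $q > 2$, valid for any $p > 1$.
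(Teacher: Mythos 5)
The paper offers no proof of this lemma, stating it as a collection of standard facts about the Dirichlet Green's operator, so there is no argument of its own to compare against; your proof is correct and is the standard one. Every step checks out: Calder\'on--Zygmund theory for (1), composition with the compact embedding $W^{2,p}(D)\hookrightarrow C^0(\bar D)$ (valid in two dimensions for every $p>1$, which also covers the endpoint $r=\infty$) for (2), Green's identity or the symmetry of $G(\mathbf x,\mathbf y)$ for (3)--(4), and you rightly flag and resolve the only delicate point, namely that $\nabla\mathcal G v\in L^2(D)$ even when $1<p\le 2$, via the two-dimensional embedding $W^{2,p}(D)\hookrightarrow W^{1,q}(D)$ with $q>2$.
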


According to the above lemma, we see that the kinetic energy $E$ in \eqref{tweak2} is well-defined, strictly convex, and weakly sequentially continuous in $L^p(D)$ for any $1<p<\infty.$

The following two lemmas concerning the properties of rearrangement class  will be used in Section \ref{sec5}.

\begin{lemma}[{\cite[Theorem 4]{BMA}}]\label{lem202}
 Let $1<p<\infty$ be fixed. Let $q:=p/(p-1)$ be the H\"older conjugate of $p$.  Let $\mathcal R_1,$ $\mathcal R_2$ be two rearrangement classes  of some $L^p$ function and some $L^{q}$ function in $D$, respectively. Then for any $v_2\in\mathcal R_2,$ there exists $v_1\in \mathcal R_1$, such that
\[\int_{D} v_1v_2 d\mathbf x\geq \int_{D}w_1w_2d\mathbf x \quad\forall\, w_1\in {\mathcal R}_1,\,\,w_2\in {\mathcal R}_2.\]
\end{lemma}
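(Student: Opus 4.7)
The plan is to exploit the Hardy--Littlewood rearrangement inequality to obtain a universal upper bound for $\int_D w_1 w_2\,d\mathbf x$ over $\mathcal R_1\times\mathcal R_2$, and then, for the given $v_2$, to construct an explicit $v_1\in\mathcal R_1$ attaining this bound.

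First, I would denote by $\phi_1^*$ and $\phi_2^*$ the decreasing rearrangements on $(0,|D|)$ of any representative of $\mathcal R_1$ and $\mathcal R_2$ respectively; these functions depend only on the rearrangement classes. The Hardy--Littlewood inequality then yields
\[
\int_D w_1 w_2\, d\mathbf x \le \int_0^{|D|}\phi_1^*(s)\phi_2^*(s)\, ds =: M
\]
for every $w_1\in\mathcal R_1$ and $w_2\in\mathcal R_2$, and H\"older combined with the norm-preserving property of monotone rearrangements ensures $M<\infty$.

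Next, I would construct $v_1\in\mathcal R_1$ satisfying $\int_D v_1 v_2\,d\mathbf x=M$ by invoking Ryff's theorem: since $v_2\in L^q(D)$, there exists a measure-preserving map $\sigma:D\to(0,|D|)$ such that $v_2=\phi_2^*\circ\sigma$ almost everywhere. Setting $v_1:=\phi_1^*\circ\sigma$, the measure-preserving property of $\sigma$ forces $v_1$ to share the distribution function of $\phi_1^*$, so $v_1\in\mathcal R_1$. A change of variables then gives
\[
\int_D v_1 v_2\, d\mathbf x=\int_D (\phi_1^*\circ\sigma)(\phi_2^*\circ\sigma)\, d\mathbf x=\int_0^{|D|}\phi_1^*(s)\phi_2^*(s)\, ds=M,
\]
which combined with the previous step closes the argument.

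The main obstacle is producing the measure-preserving map $\sigma$. If $v_2$ has level sets of positive Lebesgue measure, the naive choice $\sigma(\mathbf x)=|\{v_2>v_2(\mathbf x)\}|$ fails to be measure-preserving, and one is forced to rearrange within each flat level set of $v_2$. This is precisely the content of Ryff's theorem, and it is the only subtle point: once $\sigma$ is in hand, the remainder is a routine pairing of Hardy--Littlewood with a change of variables.
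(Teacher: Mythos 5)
Your argument is correct, but note that the paper offers no proof of this lemma at all: it is quoted verbatim from Burton (\textit{Math.\ Ann.} \textbf{276} (1987), Theorem 4), and your reconstruction --- the Hardy--Littlewood bound $\int_D w_1w_2\,d\mathbf x\le\int_0^{|D|}\phi_1^*\phi_2^*\,ds$ in its signed form, followed by Ryff's theorem to produce a measure-preserving $\sigma$ with $v_2=\phi_2^*\circ\sigma$ and the choice $v_1:=\phi_1^*\circ\sigma$ --- is essentially the proof given in that reference. The only points worth making explicit are that the Hardy--Littlewood inequality must be invoked in its version for signed functions on a finite nonatomic measure space (obtained from the nonnegative case by adding constants and truncating) and that Ryff's theorem applies here because $D$ with Lebesgue measure is nonatomic and $v_2\in L^q(D)\subset L^1(D)$; you have correctly identified the flat-level-set issue as the only genuine subtlety.
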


\begin{lemma}[{\cite[Lemma 2.3]{BACT}}]\label{lem203}
  Let $1<p<\infty$ be fixed.  Let  $\mathcal R_1$ and $\mathcal R_2$ be two  rearrangement classes  of two $L^{p}$ functions  in $D$. Then for any $v_1\in\mathcal R_1$, there exists $v_2\in \mathcal R_2$ such that
\begin{equation}\label{kd01}
\|v_1-v_2\|_{p}=\inf\left\{\|w_1-w_2\|_{p}\mid  w_1\in\mathcal R_1,\, w_2\in\mathcal R_2 \right\}.
\end{equation}
\end{lemma}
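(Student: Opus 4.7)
The plan is to construct the optimal $v_2$ explicitly, by aligning the decreasing rearrangements of $v_1$ and of any element of $\mathcal R_2$ through a common ``rank'' map, and then to verify optimality via a rearrangement inequality for convex cost. The background ingredient is the classical fact that for any measurable $f:D\to\mathbb R$ there exists a measure-preserving map $\sigma_f:D\to (0,|D|)$ such that $f=f^*\circ \sigma_f$ almost everywhere, where $f^*$ denotes the decreasing rearrangement of $f$ regarded as a function on $(0,|D|)$.

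Given $v_1\in\mathcal R_1$, I would pick such a $\sigma_1$ with $v_1=v_1^*\circ \sigma_1$ and define
\[v_2(\mathbf x):=v_2^*(\sigma_1(\mathbf x)),\qquad \mathbf x\in D,\]
where $v_2^*$ is the decreasing rearrangement of any (hence every) element of $\mathcal R_2$. Since $\sigma_1$ is measure-preserving, $v_2$ is equimeasurable with $v_2^*$, hence $v_2\in\mathcal R_2$; moreover, a change of variables gives
\[\|v_1-v_2\|_p^{\,p}=\int_D\bigl|v_1^*(\sigma_1(\mathbf x))-v_2^*(\sigma_1(\mathbf x))\bigr|^p\,d\mathbf x=\int_0^{|D|}|v_1^*(s)-v_2^*(s)|^p\,ds.\]

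To verify optimality, let $w_i\in\mathcal R_i$ ($i=1,2$) be arbitrary and pick measure-preserving maps $\sigma_i$ with $w_i=v_i^*\circ \sigma_i$. The pushforward of Lebesgue measure on $D$ under $\mathbf x\mapsto(\sigma_1(\mathbf x),\sigma_2(\mathbf x))$ is a coupling $\mu$ of Lebesgue measure with itself on $(0,|D|)^2$, and
\[\|w_1-w_2\|_p^{\,p}=\int_{(0,|D|)^2}|v_1^*(s)-v_2^*(t)|^p\,d\mu(s,t).\]
Because $v_1^*$ and $v_2^*$ are both monotone nonincreasing, the identity (``monotone'') coupling minimizes this integral among all couplings of Lebesgue with itself. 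This is a classical rearrangement inequality and may be proved by approximating $v_i^*$ by simple step functions and reducing to the elementary fact that, for two sorted real sequences, the $\ell^p$ distance is increased by any nontrivial permutation of one of them.

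The main obstacle is the construction of $\sigma_1$ when $v_1$ has level sets of positive measure: one must break ties consistently within each flat level set via a measurable selection. This ingredient is standard but delicate; once it is granted, the rest of the argument is essentially formal. No compactness is needed because $v_2$ is produced constructively, which is fortunate in view of the fact that $\mathcal R_2$ is generally not weakly closed in $L^p$.
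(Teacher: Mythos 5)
The paper does not prove this lemma; it is quoted verbatim from Burton \cite{BACT}, Lemma 2.3. Your argument is correct and is essentially Burton's own: the two ingredients are (i) Ryff's theorem (for a finite nonatomic measure space, every measurable $f$ factors as $f=f^{*}\circ\sigma$ with $\sigma$ measure-preserving onto $(0,|D|)$ --- this is exactly the ``delicate'' tie-breaking step you flag, and it is available, e.g., in Burton's 1987 \emph{Math.\ Ann.} paper \cite{BMA}), and (ii) the lower bound $\|w_1-w_2\|_p\ge\|v_1^{*}-v_2^{*}\|_{L^p(0,|D|)}$, which is the classical $L^p$-contraction property of decreasing rearrangement (Lorentz, Crandall--Tartar, Chiti). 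Your coupling/optimal-transport derivation of (ii) via submodularity of $(s,t)\mapsto|v_1^{*}(s)-v_2^{*}(t)|^p$ is a valid repackaging of that inequality, and the logic closing the argument is sound: the constructed pair $(v_1,v_2)$ attains the universal lower bound, hence realizes the infimum over $\mathcal R_1\times\mathcal R_2$, which is the (slightly surprising) content of the lemma --- that $\operatorname{dist}(v_1,\mathcal R_2)=\operatorname{dist}(\mathcal R_1,\mathcal R_2)$ for \emph{every} $v_1\in\mathcal R_1$. Your closing remark is also apt: no weak-compactness argument could work here, since $\mathcal R_2$ is not weakly closed, so the explicit construction is the right mechanism.
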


 Consider the following Laplacian eigenvalue problem:
 \begin{equation}\label{ep823}
 \begin{cases}
 -\Delta u=\lambda u, & \mathbf x\in D,\\
\int_D ud\mathbf x=0,\\
u\mbox{ is constant on } \partial D.
 \end{cases}
 \end{equation}
  The following two lemmas will be used in Section \ref{sec3}.

 \begin{lemma}[{ \cite[Proposition 3.4]{GL}}]\label{lemagl}
The least eigenvalue of \eqref{ep823} (i.e., the smallest real number such that \eqref{ep823} has a nontrivial solution) is $j^2$, and the associated eigenspace   (i.e., the set of all solutions to \eqref{ep823} with $\lambda=j^2$) is $\mathbf V.$
 \end{lemma}

 \begin{lemma}\label{lemmainsert}
For any $v\in L^2(D)$,  $v\in\mathbf V$ if and only if $v=j^2(\mathcal Gv- I_{\mathcal Gv})$, where $I_{\mathcal Gv}$ denotes  the integral mean of $\mathcal Gv$  over $D$, i.e.,
\begin{equation}\label{integralmean}
I_{\mathcal Gv}:=\frac{1}{|D|}\int_D\mathcal Gv d\mathbf x.
\end{equation}

 \end{lemma}
\begin{proof}
Denote $u=\mathcal Gv- I_{\mathcal Gv}$. If $v=j^2(\mathcal Gv- I_{\mathcal Gv})$,  then $u$ satisfies
\[ \begin{cases}
 -\Delta u=j^2 u, & \mathbf x\in D,\\
\int_D ud\mathbf x=0,\\
u\mbox{ is constant on } \partial D.
 \end{cases}
\]
In other words, $u$ is an eigenfunction of \eqref{ep823} with eigenvalue $\lambda=j^2$. 
Thus $u\in \mathbf V$ by Lemma \ref{lemagl}, which implies that $v=-\Delta u=j^2u\in \mathbf V.$
Conversely, 
if $v\in\mathbf V$, then $-\Delta v=j^2 v$ and $I_v=0.$
 By applying $\mathcal G$ on both sides of  $-\Delta v=j^2 v$, we get $v=j^2\mathcal Gv+c.$  Using $I_v=0$,  we deduce that $c=-j^2I_{\mathcal Gv}$.  Hence  $v=j^2(\mathcal Gv-I_{\mathcal Gv}).$   

\end{proof}

\section{An energy-enstrophy inequality}\label{sec3}

The purpose of this section is to prove the following proposition.

\begin{proposition}\label{propsec3}
For any $v\in L^2(D)$ such that $\int_Dvd\mathbf x=0$, it holds that
\begin{equation}\label{ineqprop}
\int_Dv\mathcal Gv d\mathbf x\leq \frac{1}{j^2}\int_Dv^2d\mathbf x.
\end{equation}
Moreover, the equality is attained if and only if $v\in\mathbf V$.

\end{proposition}
 
For a fluid flow, the integral of the square of the vorticity is referred to as \emph{enstrophy}, which is commonly used to measure the intensity of vorticity. The above proposition indicates that the kinetic energy can be controlled by a constant multiple of the enstrophy.

 To prove Proposition \ref{propsec3}, we 
 consider the following  variational problem:
\[
M:=\sup_{v\in\mathcal C}\int_Dv\mathcal Gvd\mathbf x,\tag{$V$}\]
where
\[\mathcal C:=\left\{v\in L^2(D)\mid \|v\|_2=1,\,\int_Dv d\mathbf x=0\right\}.\]

 \begin{lemma}\label{lema23}
  There exists a maximizer to $(V)$,  and any maximizer $\tilde v$ satisfies
\begin{equation}\label{pflm01}
\tilde v=\frac{1}{M}\left(\mathcal G\tilde v-I_{\mathcal G\tilde v}\right),
\end{equation}
where  $I$   denotes the integral mean as in \eqref{integralmean}. 
\end{lemma}
\begin{proof}

By  Lemma \ref{pg0}, it is easy to see that $M$ is real and positive.
Let $\{v_n\} \subset\mathcal C$ be a sequence such that
\[\lim_{n\to\infty}\int_Dv_n\mathcal Gv_nd\mathbf x=M.\]
Up to a subsequence, we can assume that $v_n$ converges weakly to some $\tilde  v$ in $L^2(D)$. It is clear that
\[\|\tilde  v\|_2\leq 1,\quad \int_D\tilde vd\mathbf x=0.\]
Moreover, since $E$ is weakly sequentially continuous in $L^{2}(D)$, we have that
\[\int_D\tilde v\mathcal G\tilde vd\mathbf x=M.\]
In particular, $\tilde  v\not\equiv  0.$ To show that $\tilde  v$ is a maximizer, it suffices to prove that $\|\tilde  v\|_2=1.$  Suppose by contradiction that $\mu:=\|\tilde  v\|_2<1$, then  $\hat v:= \mu^{-1} {\tilde v}\in\mathcal C$.  Therefore
\[\int_D\hat v\mathcal G\hat vd\mathbf x\leq M,\]
 which  contradicts
\[\int_D\hat v\mathcal G\hat vd\mathbf x=\frac{1}{\mu^2}\int_D\tilde v\mathcal G\tilde vd\mathbf x=\frac{M}{\mu^2}>M.\]
Hence we have proved the existence of a maximizer.

Next we show that any maximizer $\tilde v$ satisfies \eqref{pflm01}. For any  $\phi\in L^2(D)$ satisfying $I_{\phi}=0$, define a family of test functions $\{v_\varepsilon\}\subset \mathcal C$ by setting
\[v_\varepsilon:=\frac{\tilde v+\varepsilon\phi}{\|\tilde v+\varepsilon\phi\|_{2}},\]
where $\varepsilon\in\mathbb R$ is small in absolute value such that $\|\tilde v+\varepsilon \phi\|_{2}>0$.  Then
\[\frac{d}{d\varepsilon}\int_Dv_\varepsilon\mathcal G v_\varepsilon d\mathbf x\bigg|_{\varepsilon=0}=0,\]
which, by a direct computation,  yields
\begin{equation}\label{iidty01}
 \int_D\phi\mathcal G\tilde vd\mathbf x=M\int_D\tilde v\phi d\mathbf x.
 \end{equation}
 Here we used the symmetry of $\mathcal G$.
Since \eqref{iidty01} holds for any $\phi\in L^2(D)$ such that $I_\phi=0$, we deduce that
\begin{equation}\label{iidty02}
 \int_D\left(\varphi-I_\varphi\right)\mathcal G\tilde vd\mathbf x=M\int_D\tilde v\left(\varphi-I_\varphi\right) d\mathbf x \quad \forall\,\varphi\in L^2(D).
 \end{equation}
 Using  $I_{\tilde v}=0$,  \eqref{iidty02} can be written as
\[
 \int_D\left(\mathcal G\tilde v-I_{\mathcal G\tilde v }\right)\varphi d\mathbf x=M\int_D\tilde v \varphi d\mathbf x \quad \forall\,\varphi\in L^2(D),
\]
from which we can conclude \eqref{pflm01}.
\end{proof}

 \begin{lemma}\label{lema24}
 $M=j^{-2}.$
\end{lemma}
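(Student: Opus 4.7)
The plan is to prove the two inequalities $M \leq j^{-2}$ and $M \geq j^{-2}$ separately, in each case reducing to the spectral identification in Lemma \ref{lemagl}.

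For the upper bound, I would start from a maximizer $\tilde v$ of $(V_2)$, whose existence is guaranteed by Lemma \ref{lema23}. Applying $-\Delta$ to the Euler--Lagrange identity \eqref{pflm01} and using that $I_{\mathcal G\tilde v}$ is a constant gives
\[
-\Delta \tilde v = M^{-1}\tilde v.
\]
Evaluating \eqref{pflm01} on $\partial D$ and recalling $\mathcal G\tilde v|_{\partial D}=0$, we see that $\tilde v|_{\partial D} = -M^{-1} I_{\mathcal G\tilde v}$ is a constant. Combined with the zero-mean condition built into $\mathcal C_2$, this exhibits $\tilde v$ as an eigenfunction of \eqref{ep823} with eigenvalue $M^{-1}$; a short bootstrap using Lemma \ref{pg0}(1) upgrades $\tilde v$ to the regularity required for this interpretation. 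Lemma \ref{lemagl} then forces $M^{-1} \geq j^2$, i.e.\ $M \leq j^{-2}$.

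For the lower bound, I would exhibit an explicit competitor in $\mathcal C_2$ that attains the value $j^{-2}$. Pick any $\hat u \in \mathbf V \cap \mathcal B$; such a $\hat u$ lies in $\mathcal C_2$ because every element of $\mathbf V$ has zero mean (for the radial mode $J_0(jr)$ this follows from \eqref{bp12} together with $J_1(j)=0$, while for the angular modes $J_1(jr)\cos(\theta+\beta)$ it is immediate from $\int_0^{2\pi}\cos(\theta+\beta)\,d\theta = 0$). By Lemma \ref{lemagl}, $\hat u$ solves \eqref{ep823} with $\lambda = j^2$ and some constant boundary value $c$. Since $\hat u - c \in H^1_0(D)$ and $-\Delta(\hat u - c) = j^2 \hat u$, applying $\mathcal G$ yields $\mathcal G \hat u = j^{-2}(\hat u - c)$. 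Integrating against $\hat u$ and using $\int_D \hat u\,d\mathbf x = 0$ together with $\|\hat u\|_2 = 1$ gives
\[
\int_D \hat u\, \mathcal G \hat u \, d\mathbf x = j^{-2}\Bigl(\|\hat u\|_2^2 - c\int_D \hat u\, d\mathbf x\Bigr) = j^{-2},
\]
whence $M \geq j^{-2}$.

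Both inequalities are fairly soft once Lemma \ref{lemagl} is in hand, so I do not foresee any serious obstacle. The point requiring the most care is the regularity argument promoting the a priori merely $L^2$ maximizer $\tilde v$ to a function for which \eqref{ep823} makes classical sense; this is routine via the smoothing property of $\mathcal G$ recorded in Lemma \ref{pg0}(1).
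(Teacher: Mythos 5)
Your proof is correct, and it diverges from the paper's in the upper bound. The lower bound $M\ge j^{-2}$ is essentially the paper's computation: the paper tests $(V_2)$ with a minimizer $\tilde u$ of $(V_1)$, which by Lemma \ref{lema21}(2) is exactly an element of $\mathbf V\cap\mathcal B$, and evaluates $\int_D\tilde u\mathcal G\tilde u\,d\mathbf x=j^{-2}$ just as you do with your explicit competitor $\hat u$. For the upper bound you apply $-\Delta$ to the Euler--Lagrange identity \eqref{pflm01}, identify the maximizer $\tilde v$ (nontrivial since $\|\tilde v\|_2=1$, and positive $M$ is guaranteed by Lemma \ref{lema23}) as an eigenfunction of \eqref{ep823} with eigenvalue $M^{-1}$, and invoke Lemma \ref{lemagl} directly to get $M^{-1}\ge j^2$; this mirrors the argument the paper uses to prove $m\ge j^2$ in Lemma \ref{lema21}, but transplanted to $(V_2)$. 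The paper instead stays at the level of the variational values: it reads off from \eqref{pflm01} that $\tilde v\in\mathcal C_1$, hence $\int_D|\nabla\tilde v|^2\,d\mathbf x\ge m=j^2$, and then computes by integration by parts that this Dirichlet integral equals $1/M$. Both arguments ultimately rest on Lemma \ref{lemagl}; yours bypasses Lemmas \ref{lema20} and \ref{lema21} for this half at the cost of the regularity bootstrap you correctly flag (routine via Lemma \ref{pg0}(1) and Sobolev embedding in two dimensions), while the paper's phrasing feeds more directly into the equivalence statement of Proposition \ref{prop25}.
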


\begin{proof}

Let  $v\in\mathcal C$ be a maximizer to $(V)$. By Lemma \ref{lema23},  $u:=\mathcal G v-I_{\mathcal G v}$ satisfies $-\Delta u= M^{-1}u$ in $D$. It is also clear that $I_u=0$, and that $u$  is constant on $\partial D.$ In other words, $u$  is an eigenfunction of  \eqref{ep823} with eigenvalue $\lambda=M^{-1}$. Since $j^2$ is the least eigenvalue of \eqref{ep823} by Lemma \ref{lemagl}, we conclude that  $M^{-1}\geq  j^{2}$, or equivalently,  $M\leq j^{-2}$.

  To prove the inverse inequality, take $v\in \mathbf V$ such that $\|v\|_2=1$. Then it is clear that $v\in\mathcal C$, and $v=j^2(\mathcal G  v-I_{\mathcal G v})$ by Lemma \ref{lemmainsert}. 
  By integration by parts,  
\[
 1=\int_D v^2d\mathbf x=j^2\int_D v(\mathcal G  v-I_{\mathcal G v}) d\mathbf x=j^2\int_D  v\mathcal G  v d\mathbf x\leq j^2 M,
 \] 
 which yields $M\geq j^{-2}$.
\end{proof}

\begin{lemma}\label{lema26}
The set of maximizers to $(V)$ agrees with $\mathbf V\cap \mathcal B$, where
 \[\mathcal B:=\left\{v\in L^2(D)\mid \|v\|_2=1\right\}.\]
\end{lemma}
\begin{proof}
 If $v\in\mathcal C$ is a maximizer to $(V)$, then $v\in\mathcal B$, and 
 $v=j^2(\mathcal Gv-I_{\mathcal Gv})$  by Lemmas \ref{lema23} and \ref{lema24}.   Taking into account  Lemma \ref{lemmainsert}, we get $v\in\mathbf V.$
 Thus $v\in \mathbf V\cap \mathcal B.$

 Conversely, if $v\in\mathbf V\cap \mathcal B,$ then $v\in\mathcal C$, and $v=j^2(\mathcal Gv-I_{\mathcal Gv})$ by Lemma \ref{lemmainsert}.   By integration by parts, 
  \[1=\int_Dv^2d\mathbf x= j^2\int_Dv(\mathcal Gv-I_{\mathcal Gv})d\mathbf x=j^2\int_Dv\mathcal Gvd\mathbf x.\]
So $v$ is a maximizer to ($V$) by Lemma \ref{lema24}.
 \end{proof}

Now we are ready to prove Proposition \ref{propsec3}.
\begin{proof}[Proof of Proposition \ref{propsec3}]
Fix $v\in L^2(D)$ such that $I_v=0.$
Without loss of generality, we assume that $v\not\equiv 0.$ Then $v/\|v\|_2 \in \mathcal C$. By Lemma \ref{lema24}, it holds that
\begin{equation}\label{ineqlst}
\int_D\frac{v}{\|v\|_2}\mathcal G\left(\frac{v}{\|v\|_2}\right)d\mathbf x\leq M=\frac{1}{j^2},
\end{equation}
 which yields \eqref{ineqprop}. Moreover, in view of Lemma \ref{lema26},  the inequality in \eqref{ineqlst}  is an equality if and only $v/\|v\|_2\in\mathbf V\cap \mathcal B$,  which is equivalent to $v\in\mathbf V.$ 
\end{proof}

\section{Proof of Theorem \ref{thmmain2}}\label{sec4}

Throughout this section, let  $\bar\omega\in\mathbf V$ be fixed. Denote 
\begin{equation}\label{supbar}
\mathsf M:= \sup_{v\in  \mathcal R_{\bar\omega}}E(v),\quad 
\mathcal M:=\left\{v\in \mathcal R_{\bar\omega}\mid E(v)=\mathsf M\right\}.
\end{equation}
To prove Theorem \ref{thmmain2}, we need to show that $\mathcal M=\mathcal O_{\bar\omega}$.

 We begin with the following proposition, which is a consequence of Proposition \ref{propsec3}.

\begin{proposition}\label{propvcz} 
It holds that
\begin{equation}\label{mathsfm}
\mathsf M=\frac{1}{2j^2}\int_D\bar\omega^2 d\mathbf x
\end{equation}
and
\begin{equation}\label{mathcalm} 
\mathcal M= \mathbf V\cap\mathcal R_{\bar\omega}.
\end{equation}
\end{proposition}

 \begin{proof}
 First we prove \eqref{mathsfm}.
 For any $v\in \mathcal R_{\bar\omega},$ it holds that
  \[I_v=I_{\bar\omega}=0,\quad \int_Dv^2 d\mathbf x=\int_D\bar\omega^2d\mathbf x.\]
By Proposition \ref{propsec3},
\begin{equation*}
\mathsf M=\sup_{v\in\mathcal R_{\bar\omega}}E(v)=\frac{1}{2}\sup_{v\in\mathcal R_{\bar\omega}}\int_D  v\mathcal G  vd\mathbf x\leq \frac{1}{2j^2}\int_D v^2d\mathbf x=\frac{1}{2j^2}\int_D \bar\omega^2d\mathbf x
=\frac{1}{2}\int_D \bar\omega\mathcal G\bar\omega d\mathbf x\leq \mathsf M.
\end{equation*} 
So \eqref{mathsfm} holds. 

Next we prove \eqref{mathcalm}. If $v\in \mathbf V\cap\mathcal R_{\bar\omega}$, then by Proposition \ref{propsec3},
\[E(v)=\frac{1}{2}\int_Dv\mathcal Gv d\mathbf x=\frac{1}{2j^2}\int_Dv^2 d\mathbf x =\frac{1}{2j^2}\int_D\bar\omega^2 d\mathbf x =\mathsf M.\]
Thus $v\in \mathcal M.$ Conversely, if $v\in\mathcal M$, then $v\in\mathcal R_{\bar\omega}$ and $E(v)=\mathsf M,$ which implies that
\[\int_Dv\mathcal Gv d\mathbf x=2E(v)=2\mathsf M=\frac{1}{j^2}\int_D\bar\omega^2 d\mathbf x=\frac{1}{j^2}\int_Dv^2 d\mathbf x. \]
  By Proposition \ref{propsec3} again, we get $v\in\mathbf V,$ and thus $v\in \mathbf V\cap\mathcal R_{\bar\omega}$.
 \end{proof}

The rest of this section is devoted to proving
\begin{equation}\label{vreo}
\mathbf V\cap\mathcal R_{\bar\omega}=\mathcal O_{\bar\omega}.
\end{equation}
Before the proof, we give some comments on \eqref{vreo}. For two measurable functions $v_1,v_2:D\to\mathbb R,$ we say that $v_1$ and $v_2$ are equimeasurable if   $v_1\in\mathcal R_{v_2}$ (or $v_2\in\mathcal R_{v_1}$), denoted by  $v_1\sim v_2$. It is easy to check that $``\sim"$ is an equivalence relation (i.e.,  $``\sim"$ is reflexive, symmetric and transitive).
Then \eqref{vreo} can be rephrased as follows: for any $\bar\omega\in\mathbf V,$ under the equivalence relation  $``\sim"$, the equivalence class of $\bar\omega$ is exactly the set of two-dimensional rigid rotations of $\bar\omega$. In this sense, \eqref{vreo} provides a clean equivalence partition of $\mathbf V$ under the equivalence relation  $``\sim"$. We mention that the equimeasurable partition problem of a finite-dimensional function space also appeared in the study of other stability problems related to Laplacian eigenfunctions, such as the orbital stability of degree-2 Rossby-Haurwitz waves \cite{CWZ}, and the stability of sinusoidal flows on a flat 2-torus \cite{WZCV}.

Now we prove \eqref{vreo}. Since $\mathcal O_{\bar\omega}\subset\mathbf V\cap\mathcal R_{\bar\omega}$ is obvious, it suffices to verify the converse inclusion
\begin{equation}\label{vreo2}
\mathbf V\cap\mathcal R_{\bar\omega}\subset\mathcal O_{\bar\omega}.
\end{equation}
Without loss of generality, we can assume that $\bar\omega\not\equiv 0.$ Since $\bar\omega\in\mathbf V$, we can write
\[\bar\omega=aJ_0(jr)+bJ_1(jr)\cos(\theta+\beta),\]
where $a,b,\beta\in\mathbb R$ with $b\geq 0$.
Then
\[
\mathcal O_{\bar\omega}=\{aJ_0(jr)+bJ_1(jr)\cos(\theta+\gamma)\mid \gamma\in\mathbb R\}.
\]
For $v\in\mathbf V\cap\mathcal R_{\bar \omega}$ with the form
  \[ v=a'J_0(jr)+b'J_1(jr)\cos(\theta+\beta'),\quad a',\beta' \in\mathbb R,\, b'\geq 0,\]
 in order to prove $v\in\mathcal O_{\bar\omega}$, it suffices to verify
 \begin{equation}\label{abeq1}
 a'=a,\quad b'=b.
 \end{equation}
 To this end, we need to make use of the fact that $v\in\mathcal R_{\bar \omega}$.  For any positive integer $m$,  it holds that
   \begin{equation}\label{abeq2}
   \int_Dv^m d\mathbf x=\int_D\bar\omega^m d\mathbf x.
   \end{equation}
 In terms of polar coordinates, \eqref{abeq2} becomes
    \begin{equation}\label{rmab1}
    \int_0^1\int_0^{2\pi}
  \left(a'J_0(jr)+b'J_1(jr)\cos\theta\right)^m rd\theta dr
  =\int_0^1\int_0^{2\pi}
  \left(aJ_0(jr)+bJ_1(jr)\cos\theta\right)^m rd\theta dr.
  \end{equation}
Taking $m=2$,   \eqref{rmab1} gives
     \begin{equation}\label{rmab2}
     p_1a'^2+p_2b'^2=p_1a^2+p_2b^2,
     \end{equation}
  where
      \begin{equation}\label{p12}
   p_1:=2\pi\int_0^1J_0^2(jr)rdr=\frac{2\pi}{j^2}\int_0^{j}J_0^2(r)rdr,
   \end{equation}
     \begin{equation}\label{p12f}
   p_2 := \pi \int_0^{1}J_1^2(jr)rdr=\frac{\pi}{j^2}\int_0^{j}J_1^2(r)rdr.
   \end{equation}
Taking $m=3$, \eqref{rmab1} gives
   \begin{equation}\label{rmab3}
   q_1a'^3+ q_2a'b'^2 =q_1a^3+ q_2ab^2,
   \end{equation}
  where
    \begin{equation}\label{q12}
   q_1 := 2\pi \int_0^{1}J_0^3(jr)rdr= \frac{2\pi}{j^2}\int_0^{j}J_0^3(r)rdr,
   \end{equation}
   \begin{equation}\label{q12f}
    q_2:= 3\pi\int_0^1J_0(jr)J_1^2(jr)rdr=\frac{3\pi}{j^2}\int_0^{j}J_0 (r)J_1^2(r)rdr.
   \end{equation}
   Note that we have used the following facts in obtaining \eqref{rmab2} and \eqref{rmab3}:
  \[\int_0^{2\pi}\cos\theta d\theta=0,\quad \int_0^{2\pi}\cos^3\theta d\theta=0.\]

  To proceed, we need to compute the integrals in \eqref{p12}, \eqref{p12f}, \eqref{q12} and \eqref{q12f}.  The following integral identities related to $J_0$ and $J_1$ are needed.

\begin{lemma}\label{lmap}
The following identities hold:
\begin{equation}\label{apd1}
 \int_0^sJ_0^2(\tau)\tau d\tau=\frac{1}{2}s^2\left(J^2_0(s)+J^2_1(s)\right)\quad \forall\,s\in\mathbb R,
\end{equation}
\begin{equation}\label{apd2}
 \int_0^1J_1^2(js) sds=\frac{1}{2}J^2_0(j),
\end{equation}
\begin{equation}\label{apd3}
 \int_0^sJ_0^3(\tau)\tau d\tau=sJ_0^2(s)J_1(s)+2\int_0^sJ_0(\tau)J_1^2(\tau)\tau d\tau\quad \forall\,s\in\mathbb R,
\end{equation}
\begin{equation}\label{apd4}
\int_0^sJ_0(\tau)J_1^2(\tau)\tau d\tau=\frac{1}{3}sJ_1^3(s)
    +\frac{2}{3}\int_0^sJ_1^3(\tau) d\tau\quad \forall\,s\in\mathbb R.
\end{equation}

\end{lemma}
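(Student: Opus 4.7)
The plan is to prove the four identities independently, using only the recurrence relations already collected in Lemma \ref{lemabesl}, in particular \eqref{bp12f} ($J_0'=-J_1$) and \eqref{bp12} ($(sJ_1)'=sJ_0$, equivalently $sJ_0=J_1+sJ_1'$). None of the four statements should require a genuinely new idea; the work is purely bookkeeping.

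I would begin with \eqref{apd2}, since it is immediate: \eqref{bp32} gives $\int_0^1 J_1^2(js)s\,ds=\tfrac12 J_2^2(j)$, and \eqref{bp22} says $J_2(j)=-J_0(j)$, so squaring produces $\tfrac12 J_0^2(j)$. For \eqref{apd1} my plan is to check the identity by differentiation rather than integration: set $F(s)=\tfrac12 s^2(J_0^2+J_1^2)$ and compute $F'(s)=s(J_0^2+J_1^2)+s^2(J_0J_0'+J_1J_1')$. Using $J_0'=-J_1$ and $J_1'=J_0-J_1/s$ (which is just $(sJ_1)'=sJ_0$ divided by $s$), the bracketed expression collapses to $-J_1^2/s$, so that $F'(s)=sJ_0^2(s)$. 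Since both sides of \eqref{apd1} vanish at $s=0$, the identity follows.

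For \eqref{apd3} the natural move is integration by parts with $u=J_0^2$ and $dv=J_0\tau\,d\tau=(\tau J_1)'\,d\tau$ by \eqref{bp12}; the boundary term at $\tau=s$ is $sJ_0^2(s)J_1(s)$, and the remaining integrand becomes $-2J_0J_0'\cdot \tau J_1=2J_0J_1^2\tau$ after applying \eqref{bp12f}, giving exactly the claimed formula. For \eqref{apd4} I would first rewrite $\tau J_0=J_1+\tau J_1'$ (again \eqref{bp12}), so that
\[
\tau J_0 J_1^2=J_1^3+\tau J_1^2 J_1'=J_1^3+\tfrac{\tau}{3}(J_1^3)'.
\]
Integrating from $0$ to $s$ and applying integration by parts to the last term produces $\frac{s}{3}J_1^3(s)+\int_0^s J_1^3 d\tau-\frac{1}{3}\int_0^s J_1^3 d\tau$, which simplifies to the right-hand side of \eqref{apd4}.

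There is no conceptual obstacle here: each identity reduces to a one-line application of \eqref{bp12}--\eqref{bp22} combined with either differentiation or integration by parts. The only thing to watch is consistency of signs in \eqref{bp12f} and the fact that the recurrence $(sJ_1)'=sJ_0$ must be used in two different forms (once as an antiderivative of $\tau J_0$, once as an expression for $\tau J_1'$). I expect the full written proof to be only a few lines per identity.
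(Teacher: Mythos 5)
Your proposal is correct and follows essentially the same route as the paper: \eqref{apd2} from \eqref{bp22} and \eqref{bp32}, \eqref{apd1} by differentiating $\tfrac12 s^2(J_0^2+J_1^2)$, and \eqref{apd3}--\eqref{apd4} via the recurrences \eqref{bp12} and \eqref{bp12f}. Your integrations by parts for the last two identities are just the integrated form of the paper's computation of $(sJ_0^2J_1)'$ and $(sJ_1^3)'$, so the difference is purely presentational.
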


\begin{proof}
First we prove \eqref{apd1}. Notice that both sides of \eqref{apd1} vanish at $s=0,$ so we only need to compare their derivatives.  By a direct computation,
\begin{align*}
\left(\frac{1}{2}s^2(J^2_0 +J^2_1 )\right)'&=s\left(J^2_0 +J^2_1 \right)
+ s^2\left(J_0J'_0+J_1J'_1 \right)\\
&=sJ^2_0+sJ^2_1
-s^2 J_0 J_1 +s J_1 (sJ_0 -J_1 )\\
&=sJ^2_0,
\end{align*}
Note that in the second equality we have used \eqref{bp12} and \eqref{bp12f}. Next,  \eqref{apd2} is a straightforward consequence of \eqref{bp22} and \eqref{bp32}.
Finally, to prove \eqref{apd3} and \eqref{apd4}, it suffices to notice that
\begin{align*}
         \left(sJ_0^2J_1\right)'&=J_0^2J_1+2sJ_0J_0'J_1+sJ_0^2J_1'\\
         &=J_0^2J_1-2sJ_0 J^2_1+ J_0^2(sJ_0-J_1)\\
         &=sJ_0^3-2sJ_0J_1^2,
         \end{align*}
and
\begin{align*}
         \left(sJ_1^3 \right)'&=J_1^3+3sJ_1^2J_1'\\
         &=J_1^3+3 J_1^2(sJ_0-J_1)\\
         &=3sJ_0J_1^2-2J_1^3,
\end{align*}
where the recurrence relations \eqref{bp12} and \eqref{bp12f} were  used again.
\end{proof}

  Back to \eqref{p12} and \eqref{q12}, applying Lemma \ref{lmap}, we have that
\[p_1 =\pi J_0^2(j),\quad p_2=\frac{\pi}{2} J_0^2(j),\]
\[q_1= \frac{8\pi}{3j^2}\int_0^jJ_1^3(\tau) d\tau,\quad q_2=\frac{2\pi}{j^2}\int_0^jJ_1^3(\tau) d\tau.\]
In particular, $p_1,p_2,q_1,q_2$ are all positive (notice that $J_1$ is positive in $(0,j)$) and satisfy the following relations:
\begin{equation}\label{pqrl}
p_1=2p_2,\quad q_1=\frac{4}{3}q_2.
\end{equation}
From \eqref{rmab2}, \eqref{rmab3} and \eqref{pqrl}, we find that
  $(a',b')$ and $(a,b)$ are two solutions of
\begin{equation}\label{sae}
   \begin{cases}
         2x^2+y^2=r_1, \\
       4x^3+3xy^2=r_2
     \end{cases}
\end{equation}
 for some $r_1,r_2\in\mathbb R.$
To prove \eqref{abeq1}, it suffices to show that \eqref{sae} has at most one real solution such that $y\geq 0$.

\begin{lemma}\label{lemar12}
For any $r_1,r_2\in\mathbb R$, the system of algebraic equations \eqref{sae} has at most one real solution such that $y\geq 0$.
\end{lemma}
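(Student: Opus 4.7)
The plan is to eliminate $y$ using the first equation and reduce the problem to counting roots of a cubic in $x$ on a restricted interval where one can exploit a monotonicity argument.

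First I would observe that the first equation forces $r_1 \geq 0$ (otherwise there is no real solution at all), and it determines $y$ from $x$ uniquely under the sign constraint $y \geq 0$: namely $y = \sqrt{r_1 - 2x^2}$, with the compatibility requirement $x^2 \leq r_1/2$. Hence the uniqueness of $(x,y)$ is equivalent to the uniqueness of $x$ in the interval $I := [-\sqrt{r_1/2},\sqrt{r_1/2}]$ (taking $I=\{0\}$ if $r_1=0$) that is consistent with the second equation.

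Next I would substitute $y^2 = r_1 - 2x^2$ into $4x^3 + 3xy^2 = r_2$. A direct calculation collapses the second equation to the cubic
\begin{equation*}
f(x) := 2x^3 - 3r_1 x + r_2 = 0.
\end{equation*}
So the task reduces to showing that $f$ has at most one zero on the interval $I$.

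The key observation is that $f'(x) = 6x^2 - 3r_1 = 6\bigl(x^2 - r_1/2\bigr)$, which is $\leq 0$ precisely on $I$, and strictly negative on the interior of $I$ when $r_1 > 0$. Hence $f$ is strictly decreasing on $I$ (the case $r_1=0$ reducing $I$ to a single point and handled trivially), which immediately gives at most one zero of $f$ on $I$. Combined with the uniqueness of $y \geq 0$ given $x$, this yields the claim. I do not anticipate any serious obstacle here; the only point requiring a moment of care is the handling of boundary/degenerate cases ($r_1 \leq 0$, or the root lying on $\partial I$), which are routine.
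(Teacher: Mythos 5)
Your proof is correct and follows essentially the same route as the paper: eliminate $y$ via the first equation, reduce to the cubic $-2x^3+3r_1x=r_2$, and observe that this cubic is strictly monotone on the interval $2x^2\leq r_1$, which gives at most one admissible $x$ (and hence at most one $y\geq 0$). Your explicit treatment of the degenerate cases $r_1\leq 0$ and of the recovery of $y$ from $x$ is a minor addition of care, not a different argument.
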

\begin{proof}
Without loss of generality, we can assume that $r_1>0,$ since otherwise the system has only one solution $(0,0)$.
 Inserting the first equation into the second one yields
\begin{equation}\label{efx}
-2x^3+3r_1 x=r_2.
\end{equation}
Since we are only concerned with real solutions satisfying $y\geq 0$, it suffices to show that \eqref{efx} has at most one real solution in the interval $2x^2\leq r_1$.  Observing that  $-2x^3+3r_1 x$ is strictly increasing in the interval $2x^2\leq r_1$, we obtain the desired result.
\end{proof}

\begin{proof}[Proof of Theorem \ref{thmmain2}]
From the above discussion, we see that \eqref{vreo} holds. Theorem \ref{thmmain2} is a straightforward consequence of \eqref{mathcalm} and \eqref{vreo}.
\end{proof}

\section{Proof of Theorem \ref{thmmain}}\label{sec5}

In this section, we give the proof of Theorem \ref{thmmain} based   Theorem \ref{thmmain2}. Although the basic idea follows from Burton's paper \cite{BAR},
  there is still something new in our proof.  In  \cite{BAR}, when dealing with perturbed solutions that do not belong to the rearrangement class, Burton constructed a ``follower" for the perturbed solution by solving a linear transport equation.  To ensure that a suitable weak solution of the linear transport equation exists,   some additional regularity on  the perturbed solutions are needed. In the following proof, we provide a new approach to construct the ``follower"  based on Lemma \ref{lem203}, thereby reducing the requirements on  the perturbed solutions.

Throughout this section, let $1<p<\infty$ be fixed. To see clearly that our stability result actually holds for very general perturbations, we introduce the notion of admissible maps.

\begin{definition}\label{defadms}
   An admissible map is a map  $\zeta:\mathbb R\to  L^p(D)$ such that
 \[E(\zeta(t))=E(\zeta(0))\quad \forall\,t\in\mathbb R,\]
  \[\zeta(t)\in \mathcal R_{\zeta(0)}\quad \forall\,t\in\mathbb R.\]
\end{definition}

Note that any Yudovich solution $\omega(t,\mathbf x)$ of the vorticity equation \eqref{vor} corresponds to an admissible map $\zeta(t):=\omega(t,\cdot)$.

The following theorem is a general version of Theorem \ref{thmmain}.

\begin{theorem}\label{thm42}
Let $\bar\omega\in\mathbf V$ be fixed. Then for any $\varepsilon>0,$ there exists some $\delta>0,$ such that for any  admissible map $\zeta(t)$, it holds that
\[\min_{v\in\mathcal O_{\bar\omega}}\|\zeta(0)-v\|_p<\delta\quad \Longrightarrow \quad \min_{v\in\mathcal O_{\bar\omega}}\|\zeta(t)-v\|_p<\varepsilon\quad\forall\,t\in\mathbb R.\]
\end{theorem}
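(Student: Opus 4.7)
The natural strategy is to argue by contradiction using a compactness argument in $L^p$, with the variational characterization of Theorem \ref{thmmain2} (and Corollary \ref{coro26}) as the driving tool. Suppose Theorem \ref{thm42} fails: there exist $\varepsilon_0>0$, admissible maps $\{\zeta_n\}$, elements $v_n\in\mathcal O_{\bar\omega}$ with $\|\zeta_n(0)-v_n\|_p\to 0$, and times $t_n$ with $\min_{v\in\mathcal O_{\bar\omega}}\|\zeta_n(t_n)-v\|_p\geq\varepsilon_0$. Set $\omega_n:=\zeta_n(t_n)$. By Definition \ref{defadms}, $\omega_n\in\mathcal R_{\zeta_n(0)}$ and $E(\omega_n)=E(\zeta_n(0))$, so $\|\omega_n\|_p=\|\zeta_n(0)\|_p\to\|\bar\omega\|_p$; and since $E$ is continuous on bounded subsets of $L^p$ by the compactness of $\mathcal G$ (Lemma \ref{pg0}), $E(\omega_n)=E(\zeta_n(0))\to E(\bar\omega)$. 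The case $\bar\omega\equiv 0$ being trivial from $\|\omega_n\|_p=\|\zeta_n(0)\|_p$, I may assume $\bar\omega\not\equiv 0$.

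Next I would construct an $L^p$-follower of $\omega_n$ inside $\mathcal R_{\bar\omega}$. Lemma \ref{lem203} supplies $\tilde\omega_n\in\mathcal R_{\bar\omega}$ realizing $\|\omega_n-\tilde\omega_n\|_p=\inf\{\|w_1-w_2\|_p:w_1\in\mathcal R_{\zeta_n(0)},\,w_2\in\mathcal R_{\bar\omega}\}$. Since $\zeta_n(0)\in\mathcal R_{\zeta_n(0)}$ and $v_n\in\mathcal R_{\bar\omega}$, this infimum is bounded above by $\|\zeta_n(0)-v_n\|_p\to 0$, so $\|\omega_n-\tilde\omega_n\|_p\to 0$ and consequently $E(\tilde\omega_n)\to E(\bar\omega)$. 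This replaces the ODE-based follower of \cite{BAR} by a purely variational one, removing all regularity requirements on the perturbation.

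For the compactness step, the inclusion $\tilde\omega_n\in\mathcal R_{\bar\omega}$ gives the uniform bound $\|\tilde\omega_n\|_\infty=\|\bar\omega\|_\infty$, so passing to a subsequence I obtain $\tilde\omega_n\rightharpoonup\omega^*$ weakly in $L^2$ (and weak-$\ast$ in $L^\infty$). Weak continuity of $E$ (a consequence of the compactness of $\mathcal G$) yields $E(\omega^*)=E(\bar\omega)$. Every element of $\mathbf V$ has zero mean, because $\int_D J_0(jr)\,d\mathbf x=0$ by $(sJ_1)'=sJ_0$ and $J_1(j)=0$, and the other two basis elements are angular with zero angular integral; hence $\int_D\omega^*\,d\mathbf x=0$. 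Applying Corollary \ref{coro26} to $\omega^*/\|\omega^*\|_2$, combined with $\|\omega^*\|_2\leq\|\bar\omega\|_2$ from weak lower semicontinuity and the computed value $2E(\bar\omega)=\|\bar\omega\|_2^2/j^2$, the identity $E(\omega^*)=E(\bar\omega)$ forces $\|\omega^*\|_2=\|\bar\omega\|_2$ and $\omega^*\in\mathbf V$.

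The main obstacle is upgrading this to strong convergence and to $\omega^*\in\mathcal O_{\bar\omega}$, since a priori $\omega^*$ only sits in the weak closure of $\mathcal R_{\bar\omega}$. Uniform convexity of $L^2$ together with the equality of $L^2$ norms converts the weak convergence into strong $L^2$ convergence; extracting an a.e.-convergent subsequence and invoking the uniform $L^\infty$ bound via dominated convergence upgrades this to $\tilde\omega_n\to\omega^*$ in $L^p$ for every $1\leq p<\infty$. The a.e. convergence of a uniformly bounded sequence also transmits distribution functions to the limit, so $\omega^*\in\mathcal R_{\bar\omega}$. The equimeasurable partition identity \eqref{vreo} proved in Section \ref{sec4} then gives $\omega^*\in\mathbf V\cap\mathcal R_{\bar\omega}=\mathcal O_{\bar\omega}$, and the triangle inequality $\|\omega_n-\omega^*\|_p\leq\|\omega_n-\tilde\omega_n\|_p+\|\tilde\omega_n-\omega^*\|_p\to 0$ contradicts the defining property of $t_n$, completing the proof.
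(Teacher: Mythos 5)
Your proposal is correct, and its overall skeleton coincides with the paper's: reduce to sequences, use conservation of $E$ and of the rearrangement class, build a follower $\tilde\omega_n\in\mathcal R_{\bar\omega}$ of $\zeta_n(t_n)$ via Lemma \ref{lem203} (exactly the paper's replacement for Burton's ODE construction), and finish with a compactness statement for energy-maximizing sequences in $\mathcal R_{\bar\omega}$. Where you genuinely diverge is in how that compactness is proved. The paper's Proposition \ref{pcomp} first shows the weak limit lies in $\mathcal R_{\bar\omega}$, by invoking Burton's duality result (Lemma \ref{lem202}) to produce $\hat v\in\mathcal R_{\bar\omega}$ maximizing $\int_D v\,\mathcal G\tilde v\,d\mathbf x$, and then using positive definiteness of $\mathcal G$ to force $\tilde v=\hat v$; membership in $\mathcal O_{\bar\omega}$ and strong convergence then follow from Theorem \ref{thmmain2} and uniform convexity. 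You instead exploit the linear eigenvalue structure: the weak limit $\omega^*$ has zero mean, so Corollary \ref{coro26} applied to $\omega^*/\|\omega^*\|_2$ together with $2E(\bar\omega)=\|\bar\omega\|_2^2/j^2$ and weak lower semicontinuity of the norm forces $\|\omega^*\|_2=\|\bar\omega\|_2$ and $\omega^*\in\mathbf V$; the Radon--Riesz property then upgrades to strong $L^2$ convergence, the uniform $L^\infty$ bound gives $L^p$ convergence, and strong (hence a.e., along a subsequence) convergence of equimeasurable functions transmits the distribution function to the limit, so $\omega^*\in\mathbf V\cap\mathcal R_{\bar\omega}=\mathcal O_{\bar\omega}$ by \eqref{vreo}. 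Both routes are sound. Yours avoids Lemma \ref{lem202} entirely but leans on features specific to this problem --- the weak closedness of the zero-mean/$L^2$-sphere constraint of $(V_2)$, and the fact that $\mathbf V$ consists of bounded functions --- and it uses the strong $L^p$-closedness of rearrangement classes, a standard fact that deserves the one-line justification (agreement of distribution functions at all $s$ with $|\{\omega^*=s\}|=0$, then right-continuity). The paper's argument is the one that survives in Burton's general framework, where no $L^\infty$ bound and no eigenvalue characterization are available.
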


The following compactness result is crucial in proving  Theorem \ref{thm42}.
\begin{proposition}\label{pcomp}
Let  $\bar\omega\in\mathbf V$ be fixed. Let  $\{v_n\} \subset\mathcal R_{\bar\omega}$ be a sequence satisfying
\begin{equation}\label{bnr1}
\lim_{n\to\infty}E(v_n)=E(\bar\omega).
\end{equation}
Then $\{v_n\}$ has a subsequence converging to some $\tilde v\in\mathcal O_{\bar\omega}$ strongly in $L^p(D)$.

\end{proposition}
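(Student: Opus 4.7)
The plan is to leverage the variational characterization from Corollary \ref{coro26} together with the equimeasurable partition \eqref{vreo} to pass from weak to strong $L^p$ convergence. After rescaling I may assume $\|\bar\omega\|_2 = 1$. Every element of $\mathbf{V}$ has zero mean on $D$: for $J_0(jr)$ this follows from $(sJ_1)' = sJ_0$ and $J_1(j) = 0$, and the $\cos\theta$, $\sin\theta$ terms vanish under angular integration. Thus $\bar\omega \in \mathcal{C}_2$, and each $v_n \in \mathcal{R}_{\bar\omega}$ shares every $L^r$-norm with $\bar\omega$, so the whole sequence sits in $\mathcal{C}_2$ and is uniformly bounded in $L^\infty(D)$. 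A subsequence (still denoted $v_n$) therefore converges weakly to some $\tilde v$ in $L^2(D)$.

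By compactness of $\mathcal{G}$ on $L^2(D)$ (Lemma \ref{pg0}), a standard bilinear splitting gives $E(v_n) \to E(\tilde v)$, hence $E(\tilde v) = E(\bar\omega)$. Weak convergence preserves the mean, so $\int_D \tilde v\,d\mathbf{x} = 0$, and weak lower semicontinuity of the $L^2$-norm yields $\|\tilde v\|_2 \leq 1$. If this inequality were strict, then $\tilde v/\|\tilde v\|_2 \in \mathcal{C}_2$ would have energy $\|\tilde v\|_2^{-2}E(\bar\omega) > E(\bar\omega)$, contradicting the fact from Corollary \ref{coro26} that $E(\bar\omega)$ is the maximum of $E$ on $\mathcal{C}_2$. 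Hence $\|\tilde v\|_2 = 1$, and weak convergence together with convergence of norms in the Hilbert space $L^2(D)$ forces strong convergence $v_n \to \tilde v$ in $L^2(D)$. The uniform $L^\infty$ bound then upgrades this, by interpolation between $L^2$ and $L^\infty$, to strong convergence in every $L^r(D)$ with $r \in [1,\infty)$, and in particular in $L^p(D)$.

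Strong $L^p$ convergence implies convergence in measure, which preserves the distribution function, so $\tilde v \in \mathcal{R}_{\bar\omega}$. Moreover, the equality $\|\tilde v\|_2 = 1$ combined with $E(\tilde v) = \max_{\mathcal{C}_2} E$ and Corollary \ref{coro26} places $\tilde v$ in $\mathbf{V} \cap \mathcal{B}$, and hence in $\mathbf{V} \cap \mathcal{R}_{\bar\omega} = \mathcal{O}_{\bar\omega}$ by the identity \eqref{vreo} proved in Section \ref{sec4}. The critical step is the norm equality $\|\tilde v\|_2 = \|\bar\omega\|_2$: it rules out loss of $L^2$ mass at the weak limit, and the sharp variational identification from Corollary \ref{coro26} delivers it in essentially one line, so the real analytic content of the proposition is carried by Sections \ref{sec3} and \ref{sec4} rather than by the compactness argument itself.
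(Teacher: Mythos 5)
Your argument is correct, but it reaches the conclusion by a genuinely different route from the paper at the one step that matters. The paper first identifies the weak limit $\tilde v$ as an element of $\mathcal R_{\bar\omega}$ \emph{before} knowing any strong convergence: it invokes Burton's rearrangement--duality result (Lemma \ref{lem202}) to produce $\hat v\in\mathcal R_{\bar\omega}$ maximizing $v\mapsto\int_D v\,\mathcal G\tilde v\,d\mathbf x$, and then the inequality chain $E(\tilde v-\hat v)\le 0$ together with the positive definiteness of $\mathcal G$ (Lemma \ref{pg0}) forces $\tilde v=\hat v\in\mathcal R_{\bar\omega}$; only afterwards does it upgrade weak to strong convergence via uniform convexity of $L^p$ and equality of norms within the rearrangement class. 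You instead obtain strong convergence first, exploiting the fact that the energy maximum over the rearrangement class coincides with the maximum over the much larger, scaling-friendly set $\mathcal C_2$: the norm identity $\|\tilde v\|_2=1$ drops out of the variational characterization (Corollary \ref{coro26}), the Hilbert-space Radon--Riesz property gives strong $L^2$ convergence, and the uniform $L^\infty$ bound plus interpolation gives $L^p$; membership $\tilde v\in\mathcal R_{\bar\omega}$ then comes from strong closedness of the rearrangement class, and \eqref{vreo} finishes. Your route is more elementary in that it avoids Lemma \ref{lem202} entirely, but it leans on two features special to this problem --- the quadratic/eigenvalue structure that makes $\sup_{\mathcal C_2}E=E(\bar\omega)$, and the boundedness of $\bar\omega$ --- so it would not survive in Burton's general framework, whereas the paper's argument would. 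Two small points to tighten: the assertion that convergence in measure ``preserves the distribution function'' is loose as stated (a priori one only gets convergence of distribution functions at continuity points); what you actually need is the standard fact that a rearrangement class on a finite measure space is closed under a.e.\ (hence strong $L^p$) convergence, which follows from a short Fatou argument and deserves a line or a citation. Also, the contradiction in the norm step should explicitly dispose of the case $\tilde v\equiv 0$ via $E(\bar\omega)>0$, which is where positive definiteness of $\mathcal G$ quietly enters your proof as well.
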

\begin{proof}

Since $\{v_n\}$ is bounded in $L^p(D),$ we can assume, up to a subsequence, that $v_n$ converges weakly to some $\tilde v\in \overline{\mathcal R^w_{\bar\omega}}$ in $L^p(D)$, where $\overline{\mathcal R^w_{\bar\omega}}$ denotes the weak closure of $ {\mathcal R}_{\bar\omega}$ in $L^p(D)$. Since $E$ is sequentially weakly continuous in $L^{p}(D)$,   we deduce from \eqref{bnr1} that
\begin{equation}\label{re79}
E(\tilde v)= E(\bar \omega).
\end{equation}
If we can show  $\tilde v\in\mathcal R_{\bar\omega},$ then $\tilde v\in \mathcal O_{\bar\omega}$ according to Theorem \ref{thmmain2} and, moreover, $v_n$ converges  to  $\tilde v$  strongly in $L^p(D)$ as $n\to\infty$ by uniform convexity (cf. Proposition 3.32 of \cite{Bre}).

The rest of the proof is devoted to proving  $\tilde v\in\mathcal R_{\bar\omega}.$ Since $\bar\omega$ is a maximizer of $E$ relative to $\mathcal R_{\bar\omega}$,
we have from \eqref{re79} that
\begin{equation}\label{re81}
E(\tilde v)\geq E(v)\quad\forall\,v\in \mathcal R_{\bar\omega}.
\end{equation}
By Lemma \ref{lem202} in Section 2,  there exists some $\hat v\in\mathcal R_{\bar\omega}$ such that
\[
\int_{D} v\mathcal G\tilde vd\mathbf x\leq \int_{D}\hat v\mathcal G\tilde vd\mathbf x\quad\forall\,v\in\mathcal R_{\bar\omega},
\]
and thus
\[
\int_{D} v\mathcal G\tilde vd\mathbf x\leq \int_{D}\hat v\mathcal G\tilde vd\mathbf x\quad\forall\,v\in\overline{\mathcal R^w_{\bar\omega}}.
\]
In particular,
\begin{equation}\label{reny9}
\int_{D} \tilde v\mathcal G\tilde vd\mathbf x\leq \int_{D}\hat v\mathcal G\tilde v  d\mathbf x.
\end{equation}
Now we compute as follows:
\begin{equation}\label{compp}
\begin{split}
 E(\tilde v-\hat v)&=E(\tilde v)+E(\hat v)-\int_{D} \hat v\mathcal G\tilde  vd\mathbf x\\
&\leq E(\tilde v)+E(\hat v)-\int_{D} \tilde v\mathcal G\tilde vd\mathbf x\\
&=E(\hat v)-E(\tilde v)\\
&\leq 0.
\end{split}
\end{equation}
Note that in the first equality of \eqref{compp} we used the symmetry of $\mathcal G$, in the first inequality we used \eqref{reny9}, and in the last inequality we used \eqref{re81}.
Taking into account the fact that $\mathcal G$ is positive definite (cf. Lemma \ref{pg0}), we obtain  $\tilde v=\hat v.$  In particular, $\tilde v \in\mathcal R_{\bar\omega}.$

\end{proof}

Now we are ready to prove Theorem \ref{thm42}.

\begin{proof}[Proof of Theorem \ref{thm42}]

 It suffices to show that for any sequence of admissible maps $\{\zeta_n(t)\}$ and any sequence of times $\{t_n\}\subset\mathbb R$,  if
\begin{equation}\label{tt09}
\lim_{n\to\infty}\|\zeta_n(0)-\hat\omega\|_{p}=0
\end{equation}
for some $\hat \omega\in\mathcal O_{\bar\omega}$,
then up to a subsequence there exists some $\tilde\omega\in\mathcal O_{\bar\omega}$ such that
\begin{equation}\label{tt010}
\lim_{n\to\infty}\|\zeta_n(t_n)-\tilde\omega\|_{p}= 0.
\end{equation}
By \eqref{tt09} and the weak continuity of $E$ in $L^p(D)$,
 we have
\[\lim_{n\to\infty}E(\zeta_n(0))=E(\hat\omega)=E(\bar\omega),\]
which, in conjunction with the fact that $E$ is conserved for admissible maps, implies that
\begin{equation}\label{gx21}
\lim_{n\to\infty}E(\zeta_n(t_n))=E(\bar\omega).
\end{equation}

With \eqref{gx21} at hand, we can easily obtain the desired stability \emph{when the perturbations are restricted on $\mathcal R_{\bar\omega}.$} As a matter of fact,  if  $\zeta_n(0)\in\mathcal R_{\bar\omega}$, then $\zeta_n(t)\in\mathcal R_{\bar\omega}$ for any $t\in\mathbb R$ by the definition of admissible maps. Thus \eqref{tt010} holds for some $\tilde\omega\in\mathcal O_{\bar\omega}$ by Proposition \ref{pcomp}.

To deal with general perturbations, we need to construct a sequence of ``followers" $\{\eta_n\}\subset\mathcal R_{\bar\omega}$ associated to the sequence $\{\zeta_n(t_n)\}$. Specifically, for each positive integer $n$, we can take some  $\eta_n\in\mathcal R_{\bar\omega}$ such that
\[\|\eta_n-\zeta_n(t_n)\|_{p}=\inf\left\{\|u-v\|_{p}\mid u\in\mathcal R_{\bar\omega},\,v\in\mathcal R_{\zeta_n(t_n)}\right\}.\]
Such $\eta_n$ exists by Lemma \ref{lem203} in Section \ref{sec2}.
 In particular, it holds that
\begin{equation}\label{ty12} \|\eta_n-\zeta_n(t_n)\|_{p}\leq \|\hat\omega-\zeta_n(0)\|_{p},
\end{equation}
where we used the facts that $\hat\omega\in\mathcal R_{\bar\omega}$ and $\zeta_n(0)\in\mathcal R_{\zeta_n(t_n)}$.
From \eqref{tt09} and \eqref{ty12}, we obtain
\begin{equation}\label{tt121}
\lim_{n\to\infty}\|\eta_n-\zeta_n(t_n)\|_{p} =0,
\end{equation}
and thus
\begin{equation}\label{tt08}
\lim_{n\to\infty} E(\eta_n)= E(\bar\omega)
\end{equation}
by \eqref{gx21}.
That is, we have constructed  a sequence  $\{\eta_n\}\subset\mathcal R_{\bar\omega}$ such that \eqref{tt08} holds.
Applying  Proposition \ref{pcomp} to the sequence $\{\eta_n\}$, we find that $\eta_n$ converges, up to a subsequence, to some $\tilde\omega\in\mathcal O_{\bar\omega}$ strongly in $L^p(D)$  as $n\to\infty$, which together with \eqref{tt121} leads to \eqref{tt010}.

\end{proof}

\section{Rotating solutions}\label{sec6}

First we prove a basic fact for the vorticity equation \eqref{vor} in a disk.
\begin{lemma}\label{lema51}
  Let $\Omega\in\mathbb R$ be fixed. Then, in polar coordinates,
 $\omega(t,r,\theta)$ is a Yudovich solution of  \eqref{vor} if and only if $\omega(t,r,\theta-\Omega t)+2\Omega$ is a Yudovich solution of  \eqref{vor}.
\end{lemma}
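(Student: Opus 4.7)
The plan is to verify directly that if $\omega(t,r,\theta)$ is a Yudovich solution, then $\tilde\omega(t,r,\theta) := \omega(t, r, \theta-\Omega t) + 2\Omega$ satisfies the vorticity equation in the distributional sense on $\mathbb R\times D$; since $\tilde\omega \in L^\infty(\mathbb R; L^\infty(D))$ with initial datum $\tilde\omega(0,\cdot) = \omega(0,\cdot) + 2\Omega$, Yudovich's uniqueness statement then automatically promotes $\tilde\omega$ to a full Yudovich solution, so that conditions (2)--(4) are inherited for free. The converse implication follows by applying the same transformation with $-\Omega$ in place of $\Omega$ to $\tilde\omega$, which recovers $\omega$.

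Two structural facts on the unit disk drive the argument. First, $\mathcal G[1](\mathbf x) = (1-|\mathbf x|^2)/4$, so $\nabla^\perp \mathcal G[2\Omega] = \Omega(-x_2, x_1)$, which is exactly the velocity field of a rigid rotation at angular velocity $\Omega$; in other words, replacing $\omega$ by $\omega + 2\Omega$ augments the velocity by exactly a rigid rotation of rate $\Omega$. Second, since $D$ is rotationally symmetric, $\mathcal G$ commutes with planar rotations, and a brief computation gives
\[
\mathcal G\tilde\omega(t, r, \theta) = \mathcal G\omega(t, r, \theta - \Omega t) + \tfrac{\Omega(1-r^2)}{2},\qquad \tilde{\mathbf v}(t, \mathbf x) = R_{\Omega t} \mathbf v(t, R_{-\Omega t}\mathbf x) + \Omega(-x_2, x_1),
\]
where $\mathbf v := \nabla^\perp \mathcal G\omega$ and $R_\alpha$ denotes counterclockwise rotation by angle $\alpha$.

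Given any $\varphi \in C_c^\infty(\mathbb R\times D)$, I would introduce the rotated test function $\Phi(t, \mathbf y) := \varphi(t, R_{\Omega t}\mathbf y) \in C_c^\infty(\mathbb R\times D)$ and perform the change of variables $\mathbf y = R_{-\Omega t}\mathbf x$ in the integral $\int_{\mathbb R}\int_D \tilde\omega(\partial_t\varphi + \tilde{\mathbf v}\cdot\nabla\varphi)\,d\mathbf x\,dt$. The chain rule gives $\partial_t\varphi(t, R_{\Omega t}\mathbf y) = \partial_t\Phi(t, \mathbf y) - \Omega\partial_\theta\Phi(t, \mathbf y)$, while orthogonality of rotations and the identity $(-y_2, y_1)\cdot\nabla = \partial_\theta$ (in the polar variables of $\mathbf y$) yield $\tilde{\mathbf v}\cdot\nabla\varphi\big|_{\mathbf x = R_{\Omega t}\mathbf y} = \mathbf v(t, \mathbf y)\cdot\nabla\Phi(t, \mathbf y) + \Omega\partial_\theta\Phi(t, \mathbf y)$. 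Expanding the factor $(\omega + 2\Omega)$ produces two cross terms: $2\Omega\int_D \mathbf v\cdot\nabla\Phi\,d\mathbf y$, which vanishes because $\mathbf v$ is divergence-free and tangent to $\partial D$ while $\Phi$ vanishes on $\partial D$, and $2\Omega^2\int_D\partial_\theta\Phi\,d\mathbf y$, which vanishes by periodicity in $\theta$. The Coriolis-type terms $\pm\Omega\,\omega\,\partial_\theta\Phi$ cancel, and what remains is the weak formulation of the vorticity equation for $\omega$, tested against $\Phi$, which is zero by assumption.

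The subtle point is exactly this Coriolis cancellation: the $-\Omega\partial_\theta\Phi$ generated by time-differentiating the rotating test function must be balanced precisely by the $+\Omega\partial_\theta\Phi$ produced by dotting the rigid-rotation part $\Omega(-x_2, x_1)$ of $\tilde{\mathbf v}$ into $\nabla\varphi$. This is what forces the shift in the vorticity to be the value $2\Omega$ (and no other constant), reflecting the elementary fact that a rigid rotation at rate $\Omega$ carries vorticity $2\Omega$. Once the distributional equation is verified, Yudovich's uniqueness identifies $\tilde\omega$ with the unique Yudovich solution issuing from $\omega(0,\cdot) + 2\Omega$, completing the proof.
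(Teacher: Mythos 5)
Your proof is correct, and its core is the same as the paper's: both arguments rest on the two structural facts that $\mathcal G[2\Omega]=\Omega(1-r^2)/2$ has $\nabla^\perp\mathcal G[2\Omega]=\Omega(-x_2,x_1)$ (a rigid rotation of rate $\Omega$, carrying vorticity $2\Omega$), and that $\mathcal G$ commutes with rotations of the disk. The execution differs in a way worth noting. The paper writes the vorticity equation classically in polar coordinates, $\partial_t\omega+\tfrac1r(\partial_\theta\mathcal G\omega\,\partial_r\omega-\partial_r\mathcal G\omega\,\partial_\theta\omega)=0$, and verifies by the chain rule that the extra term $-\Omega\partial_\theta\omega$ coming from $\partial_t$ is exactly compensated by the shift $\partial_r\mathcal G\zeta=\partial_r\mathcal G\omega-\Omega r$; this is a pointwise computation that, strictly speaking, presumes more smoothness than a Yudovich solution has. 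You instead carry out the identical cancellation at the level of the distributional formulation, transferring the rotation onto the test function $\Phi(t,\mathbf y)=\varphi(t,R_{\Omega t}\mathbf y)$ and checking that the Coriolis terms $\mp\Omega\partial_\theta\Phi$ cancel while the cross terms in $2\Omega$ vanish by incompressibility, compact support, and periodicity; you then invoke Yudovich uniqueness to inherit properties (2)--(4) of the definition. Your route is the more rigorous one for merely bounded vorticities and is self-contained modulo the (routine) checks that $\Phi\in C_c^\infty(\mathbb R\times D)$ and that $\tilde\omega\in C(\mathbb R;L^s(D))$ so that uniqueness applies; the paper's route is shorter and makes the mechanism of the cancellation immediately visible. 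Both are sound.
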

\begin{proof}
First we expressed the vorticity equation \eqref{vor} in polar coordinates.
In view of the relation
\[x_1=r\cos\theta,\quad x_2=r\sin\theta,\]
we have from the chain rule that
\[\partial_{r}\omega =  \partial_{x_1}\omega\cos\theta+\partial_{x_2}\omega\sin\theta,\quad \partial_{\theta}\omega =  r\left(-\partial_{x_1}\omega  \sin\theta+\partial_{x_2}\omega \cos \theta\right),\]
which implies that
\begin{equation}\label{crul01}
\partial_{x_1}\omega=\partial_r\omega\cos\theta-\frac{1}{r}\partial_\theta \omega\sin\theta,\quad \partial_{x_2}\omega=\partial_r\omega\sin\theta+\frac{1}{r}\partial_\theta \omega\cos\theta.
\end{equation}
From \eqref{crul01}, we see that \eqref{vor} in polar coordinates has the form
\begin{equation}\label{vpc01}
 \partial_t\omega+  \frac{1}{r}\left(\partial_\theta\mathcal G\omega\partial_r\omega-\partial_r\mathcal G\omega\partial_\theta\omega\right)=0.
\end{equation}
 Denote $\zeta(t,r,\theta)=\omega(t,r,\theta-\Omega t)+2\Omega.$ Then, by the chain rule, we have that
\begin{equation}\label{pdr01}
\partial_t\zeta =\left[\partial_t\omega
-\Omega\partial_\theta\omega\right]\big|_{(t,r,\theta-\Omega t)},
\end{equation}
\begin{equation}\label{pdr02}
\partial_r\zeta = \partial_r\omega\big|_{(t,r,\theta-\Omega t)},\quad \partial_r \mathcal G\zeta = \partial_r\mathcal G\omega\big|_{(t,r,\theta-\Omega t)}-\Omega r,
\end{equation}
\begin{equation}\label{pdr03}
\partial_\theta\zeta(t,r,\theta)= \partial_\theta\omega\big|_{(t,r,\theta-\Omega t)},\quad \partial_\theta \mathcal G\zeta = \partial_\theta\mathcal G\omega\big|_{(t,r,\theta-\Omega t)},
\end{equation}
where we used the facts that
\[\mathcal G\Omega=\frac{1}{4}\Omega\left(1-r^2\right),\quad \mathcal G(\omega(t,r,\theta- \Omega t))=\mathcal G\omega(t,r,\theta- \Omega t).\]
From \eqref{pdr01}-\eqref{pdr03}, we find that $\zeta$ solves \eqref{vpc01} if and only if $\omega$ solves \eqref{vpc01}.

\end{proof}

As an application of Lemma \ref{lema51}, we can show that the orbital stability we have obtained in Theorem \ref{thmmain} is in fact optimal.

\begin{theorem}\label{thm52}
Let $\bar\omega\in\mathbf V$ be fixed. Then there exists a sequence of solutions   $\{\omega_n\}$ of the vorticity equation such that
 \begin{itemize}
   \item [(1)] $\omega_n(0,\cdot)\to \bar\omega$ in $L^p(D)$ as $n\to\infty$ and,
   \item  [(2)]for any $\tilde \omega\in\mathcal O_{\bar\omega},$ there exists a sequence  of times $\{t_n\}$ such that
     $\omega_n(t_n,\cdot)\to \tilde\omega$ in $L^p(D)$ as $n\to\infty$.
 \end{itemize}
\end{theorem}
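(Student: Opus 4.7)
\textbf{Proof proposal for Theorem \ref{thm52}.} The plan is to exploit Lemma \ref{lema51} together with the fact that $\bar\omega$ is itself a steady (hence in particular Yudovich) solution of \eqref{vor}. Since any shift of $\bar\omega$ by a small constant $2\Omega$ produces, via Lemma \ref{lema51}, a uniformly rotating Yudovich solution with angular velocity $\Omega$, we can make the initial datum arbitrarily close to $\bar\omega$ in $L^p(D)$ while simultaneously forcing the solution to sweep through every element of $\mathcal O_{\bar\omega}$ as time varies. The time required to reach a prescribed rotation angle blows up as $\Omega\to 0$, but this is precisely what makes the resulting sequence a counterexample to stability on any set strictly smaller than $\mathcal O_{\bar\omega}$.

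Concretely, I would proceed as follows. Pick any sequence $\Omega_n\to 0$ with $\Omega_n\neq 0$ (e.g., $\Omega_n=1/n$). Since $\bar\omega(r,\theta)$ is a steady Yudovich solution of \eqref{vor}, Lemma \ref{lema51} (applied with $\Omega=\Omega_n$) guarantees that
\[
\omega_n(t,r,\theta):=\bar\omega(r,\theta-\Omega_n t)+2\Omega_n
\]
is a Yudovich solution of \eqref{vor} for each $n$. Evaluating at $t=0$ gives $\omega_n(0,\cdot)=\bar\omega+2\Omega_n$, so
\[
\|\omega_n(0,\cdot)-\bar\omega\|_p=2|\Omega_n||D|^{1/p}\longrightarrow 0,
\]
which establishes item (1). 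For item (2), given any $\tilde\omega\in\mathcal O_{\bar\omega}$, write $\tilde\omega(r,\theta)=\bar\omega(r,\theta+\beta_0)$ for some $\beta_0\in\mathbb R$ (definition \eqref{deforb}) and set $t_n:=-\beta_0/\Omega_n$. Then
\[
\omega_n(t_n,r,\theta)=\bar\omega(r,\theta+\beta_0)+2\Omega_n=\tilde\omega(r,\theta)+2\Omega_n,
\]
and the same estimate yields $\|\omega_n(t_n,\cdot)-\tilde\omega\|_p\to 0$.

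There is essentially no obstacle in this argument: the whole content has already been packaged into Lemma \ref{lema51}, which converts the trivial symmetry $\bar\omega\mapsto\bar\omega+2\Omega$ into a genuine rotating Euler flow. The only point worth a brief verification is that the steady function $\bar\omega\in\mathbf V$ really is a Yudovich solution in the sense of Yudovich's Theorem, which is immediate since $\bar\omega\in C^\infty(\bar D)\subset L^\infty(D)$ and satisfies \eqref{vor} classically. The sharpness interpretation then reads: any closed set $S\ni\bar\omega$ that is stable under the Euler dynamics (in the $L^p$ sense of Theorem \ref{thmmain}) must contain $\mathcal O_{\bar\omega}$, because the above sequence with $\omega_n(0,\cdot)$ arbitrarily close to $\bar\omega$ visits an arbitrary neighborhood of an arbitrary $\tilde\omega\in\mathcal O_{\bar\omega}$ at finite time.
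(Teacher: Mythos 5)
Your proposal is correct and is essentially identical to the paper's own proof, which takes $\omega_n(t,r,\theta)=\bar\omega(r,\theta-\tfrac{1}{n}t)+\tfrac{2}{n}$ and invokes Lemma \ref{lema51}; you have merely written out the (routine) verification of items (1) and (2) that the paper leaves implicit.
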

\begin{proof}
 It suffices to choose
 \[\omega_n(t,r,\theta)=\bar\omega\left(r,\theta-\frac{1}{n}t\right)
 +\frac{2}{n},\quad n=1,2,3,\cdot\cdot\cdot.\]
  Note that $\omega_n$ solves \eqref{vor} by Lemma \ref{lema51}.
\end{proof}

Denote
\[\mathbf V+2\Omega:=\left\{v+2\Omega \mid v\in\mathbf V\right\}.\]
According to Lemma \ref{lema51}, the Yudovich solution with initial vorticity  in $\mathbf V+2\Omega$ rotates around the origin with angular velocity $\Omega$, thus forming an $\mathbb S\mathbb O(2)$-orbit. The following theorem is about the stability of these orbits.

\begin{theorem}\label{thm53}
 Let $1<p<\infty$ be fixed. Let $\bar\omega\in\mathbf V$ and  $\Omega\in\mathbb R$ be fixed.  Then for any $\varepsilon>0$, there exists some $\delta>0$, such that for any Yudovich solution $\omega(t,\mathbf x)$ of the vorticity equation \eqref{vor}, it holds that
\[\min_{v\in\mathcal O_{\bar\omega+2\Omega}}\|\omega(0,\cdot)-v\|_p<\delta\quad \Longrightarrow\quad  \min_{v\in\mathcal O_{\bar\omega+2\Omega}}\|\omega(t,\cdot)-v\|_p<\varepsilon\quad\forall\,t\in\mathbb R.\]
\end{theorem}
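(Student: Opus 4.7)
The strategy is to reduce Theorem \ref{thm53} directly to Theorem \ref{thmmain} using the rotational covariance of the vorticity equation recorded in Lemma \ref{lema51}. Given any Yudovich solution $\omega(t,r,\theta)$ of \eqref{vor}, I would pass to the corotating solution
\[
\tilde\omega(t,r,\theta) := \omega(t,r,\theta+\Omega t) - 2\Omega.
\]
Applying Lemma \ref{lema51} with angular velocity $-\Omega$ in place of $\Omega$ shows that $\tilde\omega$ is itself a Yudovich solution of \eqref{vor}. In particular $\tilde\omega(0,r,\theta) = \omega(0,r,\theta) - 2\Omega$.

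The next step is to transfer the orbit-distance between $\mathcal O_{\bar\omega+2\Omega}$ and $\mathcal O_{\bar\omega}$. Since adding a constant commutes with the $\mathbb S\mathbb O(2)$-action in polar coordinates, one has $\mathcal O_{\bar\omega+2\Omega} = \{w+2\Omega \mid w \in \mathcal O_{\bar\omega}\}$. Combined with the rotational invariance of the $L^p(D)$ norm (performing the substitution $\theta' = \theta - \Omega t$ inside the integral), a short computation yields the identity
\[
\min_{v\in\mathcal O_{\bar\omega+2\Omega}}\|\omega(t,\cdot)-v\|_p
\;=\;\min_{w\in\mathcal O_{\bar\omega}}\|\tilde\omega(t,\cdot)-w\|_p
\qquad \forall\, t\in\mathbb R.
\]

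With this identity in hand the result is immediate. Given $\varepsilon>0$, let $\delta>0$ be the constant produced by Theorem \ref{thmmain} applied to $\bar\omega\in\mathbf V$. If $\omega$ satisfies $\min_{v\in\mathcal O_{\bar\omega+2\Omega}}\|\omega(0,\cdot)-v\|_p<\delta$, then the identity at $t=0$ shows $\tilde\omega(0,\cdot)$ is within $\delta$ of $\mathcal O_{\bar\omega}$. Theorem \ref{thmmain} then gives $\min_{w\in\mathcal O_{\bar\omega}}\|\tilde\omega(t,\cdot)-w\|_p<\varepsilon$ for every $t\in\mathbb R$, and applying the identity in the opposite direction transports this bound back to $\omega$.

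There is no serious obstacle in this argument: once Lemma \ref{lema51} is in place, everything reduces to careful bookkeeping in polar coordinates. The only points requiring attention are (i) verifying that Lemma \ref{lema51} is invoked with the correct sign of $\Omega$ (which is built into its if-and-only-if formulation), and (ii) confirming that the change of variables used in the orbit-distance identity pairs every element of $\mathcal O_{\bar\omega+2\Omega}$ bijectively with an element of $\mathcal O_{\bar\omega}$, a point which follows from the fact that the shift $\theta\mapsto\theta+\Omega t$ is itself an element of $\mathbb S\mathbb O(2)$.
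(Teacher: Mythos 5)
Your proposal is correct and follows essentially the same route as the paper: both pass to the corotating, constant-shifted solution $\omega(t,r,\theta+\Omega t)-2\Omega$ via Lemma \ref{lema51} and then invoke Theorem \ref{thmmain}. The only difference is presentational — the paper phrases the reduction in terms of sequences of solutions, while you make the equality of orbit-distances $\min_{v\in\mathcal O_{\bar\omega+2\Omega}}\|\omega(t,\cdot)-v\|_p=\min_{w\in\mathcal O_{\bar\omega}}\|\tilde\omega(t,\cdot)-w\|_p$ explicit, which if anything tightens the bookkeeping the paper leaves to the reader.
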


\begin{proof}
It suffices to show that for any sequence of Yudovich solutions $\{\omega_n(t,\mathbf x)\}$ of the vorticity equation \eqref{vor}, if
\begin{equation}\label{tg09}
\lim_{n\to\infty}\|\omega_n(0,\cdot)-\hat v\|_{p}=0
\end{equation}
for some $\hat v\in\mathcal O_{\bar\omega+2\Omega}$,
then up to a subsequence there exists some $\tilde v\in\mathcal O_{\bar\omega+2\Omega}$ such that
\begin{equation}\label{tg010}
\lim_{n\to\infty}\|\omega_n(t_n,\cdot)-\tilde v\|_{p}= 0.
\end{equation}
Denote $\hat \omega:=\hat v-2\Omega$. Then $\hat\omega\in\mathcal O_{\bar\omega}$ and
\begin{equation}\label{tl09}
\lim_{n\to\infty}\|\omega_n(0,\cdot)-2\Omega-\hat \omega\|_{p}=0
\end{equation}
Denote
\[\xi_n(t,r,\theta):=\omega_n(t,r,\theta+\Omega t)-2\Omega.\]
Since $\omega_n(t,r,\theta)$ is a solution of \eqref{vor}, using Lemma \ref{lema51} we see that $\xi_n(t,r,\theta)$ is a solution of \eqref{vor} with initial value $\xi_n(0,\cdot)=\omega_n(0,\cdot)-2\Omega.$ Therefore by Theorem \ref{thmmain}, we deduce that up to a subsequence
\begin{equation}\label{tl10}
\lim_{n\to\infty}\|\xi_n(t_n,\cdot)-\check \omega\|_{p}=0
\end{equation}
for some $\check \omega\in\mathcal O_{\bar\omega}.$ From \eqref{tl10},  it is easy to check that $\omega_n(t_n,\cdot)$ converges to  $\tilde\omega+2\Omega$ for some $\tilde\omega\in\mathcal O_{\bar\omega}$ in $L^p(D)$ as $n\to\infty.$
\end{proof}

\bigskip

 \noindent{\bf Acknowledgements:} 
G. Wang was supported by NNSF of China grant  12471101.

\bigskip
\noindent{\bf  Data Availability} Data sharing not applicable to this article as no datasets were generated or analysed during the current study.

\bigskip
\noindent{\bf Declarations}

\bigskip
\noindent{\bf Conflict of interest}  The author declare that they have no conflict of interest to this work.

\phantom{s}
 \thispagestyle{empty}

\end{document}